\documentclass[reqno, a4paper]{amsart}
\usepackage{amssymb, mathdots}
\usepackage{mathabx}
\usepackage{mathrsfs}
\usepackage[utf8]{inputenc}
\usepackage{amsmath,  graphicx, tensor}
\usepackage{tikz}
\usetikzlibrary{matrix, arrows.meta}
\usetikzlibrary{cd}

\usepackage{enumerate}
\usepackage{hyperref}
\DeclareSymbolFont{bbold}{U}{bbold}{m}{n}
\DeclareSymbolFontAlphabet{\mathbbold}{bbold}
%

%
%



 \def\psp#1#2%
  {\mathop{}%
   \mathopen{\vphantom{#2}}^{#1}%
   \kern-\scriptspace%
    \hskip -0.3mm{#2}} 
 \def\psb#1#2%
  {\mathop{}%
   \mathopen{\vphantom{#2}}_{#1}%
   \kern-\scriptspace%
    \hskip -0.0mm{#2}} 
 \def\pscr#1#2#3%
  {\mathop{}%
   \mathopen{\vphantom{#3}}^{#1}_{#2}%
   \kern-\scriptspace%
    \hskip -0.3mm{#3}} 

%



\def\C{{\mathbb C}}

\def\N{{\mathbb N}}

\def\P{{\mathbb P}}

\def\R{{\mathbb R}}

\def\cringle{\mathaccent23}
\def\union{\mathop{\bigcup}}



\def\textmap#1{\mathop{\vbox{\ialign{
                                  ##\crcr
      ${\scriptstyle\hfil\;\;#1\;\;\hfil}$\crcr
      \noalign{\kern 1pt\nointerlineskip}
      \rightarrowfill\crcr}}\;}}
\def\bigtextmap#1{\mathop{\vbox{\ialign{
                                  ##\crcr
      ${\hfil\;\;#1\;\;\hfil}$\crcr
      \noalign{\kern 1pt\nointerlineskip}
      \rightarrowfill\crcr}}\;}}
      

\newcommand{\cal}{\mathcal}
\def\textlmap#1{\mathop{\vbox{\ialign{
                                  ##\crcr
      ${\scriptstyle\hfil\;\;#1\;\;\hfil}$\crcr
      \noalign{\kern-1pt\nointerlineskip}
      \leftarrowfill\crcr}}\;}}

\def\g{{\mathfrak g}}

\theoremstyle{remark}

\newtheorem*{pb}{Problem}

\newtheorem*{cl}{Claim}

\newtheorem{sz}{Satz}[section]
\theoremstyle{remark}
\newtheorem{re}[sz]{Remark} 
\theoremstyle{plain}
\newtheorem{thry}[sz]{Theorem}
\newtheorem{pr}[sz]{Proposition}
\newtheorem{co}[sz]{Corollary}
\newtheorem{dt}[sz]{Definition}
\newtheorem{lm}[sz]{Lemma}


\def\End{\mathrm {End}}
\def\Aut{\mathrm {Aut}}

\def\GL{\mathrm {GL}}
\def\gl{\mathrm {gl}}

\def\id{ \mathrm{id}}

\def\ad{\mathrm {ad}}

\newcommand\smvee{{\hskip -0.1ex \raise 0.2ex\hbox{$\scriptscriptstyle\vee$}}\hskip -0,3ex}

%

\newcommand{\longhookrightarrow}{}
\DeclareRobustCommand{\longhookrightarrow}{\lhook\joinrel\relbar\joinrel\rightarrow}

\def\Ad{\mathrm{Ad}}
\def\trp#1{\tensor[^{\mathrm{t}}]{#1}{}}
\def\edf{\coloneq}

\begin{document}

\title{Holomorphic bundles on complex manifolds with boundary}

\author{Andrei Teleman }
\address{Aix Marseille Univ, CNRS, I2M, Marseille, France.}
\email{andrei.teleman@univ-amu.fr}
\begin{abstract}
Let $\Omega$ be a complex manifold, and let $X\subset \Omega$  be an open submanifold whose closure $\bar X$ is a (not necessarily compact) submanifold with smooth boundary.	

 Let $G$ be a complex Lie group, $\Pi$ be a differentiable principal $G$-bundle on $\Omega$ and $J$ a formally integrable bundle almost complex structure on the restriction $\bar P\coloneq \Pi|_{\bar X}$. We prove that, if the boundary of $\bar X$ is strictly pseudoconvex,  $J$ extends to a holomorphic structure on the restriction of $\Pi$ to a neighborhood of $\bar X$  in $\Omega$. This answers positively and generalizes a problem stated in \cite{Do}. We obtain a gauge theoretical interpretation of the quotient  ${\cal C}^\infty(\partial \bar X,G)/{\cal O}^\infty(\bar X,G)$ associated with any compact Stein manifold with boundary $\bar X$ endowed with a Hermitian metric. 
 
For a fixed differentiable $G$-bundle $\bar P$ on a complex manifold $\bar X$ with non-pseudoconvex boundary, we study the set of formally integrable almost complex structures on $\bar P$ which admit formally holomorphic local trivializations at boundary points. We give an example where a ``generic"   formally integrable almost complex on $\bar P$ admits formally holomorphic local trivializations at {\it no} boundary point, whereas the set of formally integrable almost complex structures which admit formally holomorphic local trivializations at {\it all} boundary points is dense.
 
 \end{abstract}
 
 \thanks{I am indebted to Mauro Nacinovich for a very useful exchange of mails on the collar neighborhood theorem. I thank Matei Toma for interesting discussions which led me to the problems considered in this article. }
 \subjclass[2020]{32L05, 32G13, 32T15}

\maketitle

\tableofcontents

\section{Introduction}

A complex manifold with boundary is a manifold with boundary   $\bar X=X\cup\partial \bar X$ endowed with a smooth (up to boundary) almost complex structure (ACS) $j$ which is   {\it formally integrable}, i.e.  whose Neihenhuis tensor vanishes. Note that the "up to the boundary" version of the  Newlander-Nirenberg theorem does not hold in general. More precisely the formal integrability of  $j$ does not guarantee the existence of $j$-compatible charts around the boundary points \cite{Hi}. For this reason Hill uses the terminology "complex manifold with abstract boundary" for a pair $(\bar X,j)$ as above \cite{Hi}.     

A smooth map $f:(\bar X,\partial\bar X)\to (\bar Y, \partial\bar Y)$ between complex manifolds with boundary, or a smooth map $f:\bar X\to Y$ from a complex manifold with boundary to a complex manifold, will be called {\it formally holomorphic} if it is pseudo-holomorphic, i.e. if its differential at any point commutes with the two almost complex structures. Such a map is holomorphic in the classical sense (hence analytic) at any $x\in X$, but, in general, not at a boundary point; this justifies the chosen terminology {\it formally} holomorphic. If $Y=\C$, the formally holomorphy condition becomes $\bar\partial f=0$.

 Let $G$ be a complex Lie group   and $p:\bar P\to \bar X$ be a differentiable principal $G$-bundle  on $\bar X$. The boundary $\partial\bar P$ of $\bar P$ is the pre-image $p^{-1}(\partial\bar X)$. By definition, a bundle  ACS on $\bar P$ is an ACS   which makes the projection $\bar P\to \bar X$ and the $G$-action $\bar P\times G\to \bar P$  pseudo-holomorphic.  In the particular case  $G=\GL(r,\C)$, the data of a bundle ACS  $J$ on $\bar P$ is equivalent to the data of a Dolbeault operator  $\delta:A^{0}(\bar X,E)\to A^{01}(\bar X,E)$ on the associated vector bundle $E$, and the formal integrability condition for $J$ is equivalent to the formal integrability condition $\delta^2=0$ for $\delta$. \vspace{1mm}
 
 The main goal of this article is the following extension theorem:

 \newtheorem*{Th1}{Theorem \ref{ExtToCollar}}
 \begin{Th1}
Let $\Omega$ be a complex manifold, and let $X\subset \Omega$  be an open submanifold whose closure $\bar X$ is a   submanifold with smooth boundary.	 Let $G$ be a complex Lie group, $\Pi$ be a differentiable principal $G$-bundle on $\Omega$ and $J$ be a formally integrable bundle ACS on the restriction $\bar P\coloneq \Pi|_{\bar X}$

If the boundary of $\bar X$ is strictly pseudoconvex,  there exists an open neighborhood $\Omega'$ of $\bar X$ in $\Omega$ and an integrable bundle ACS $J'$  on $\Pi|_{\Omega'}$ which extends $J$.  
 \end{Th1}

I came to this statement thinking  of the following problem stated in \cite[p. 102]{Do}:  

\begin{pb}\label{DoPb} (Donaldson's problem) Let $X\subset\C^n$  be a relatively compact domain with smooth, strictly pseudoconvex  boundary, and let $E$ be a topologically trivial holomorphic vector bundle  on $\bar X$. Prove that $E$ has a global smooth trivialization on $\bar X$ which is holomorphic on $X$.
\end{pb} 	

  In the special case $n=2$ the claim is proved in  \cite[Appendix A]{Do}.  
    As noted at the beginning of Donaldson's proof, the claim follows easily from Grauert's classification theorem for holomorphic bundles on Stein manifolds if one can prove that $E$ extends to an open neighborhood of $\bar X$ in $\C^n$.

Therefore a  special case of Theorem \ref{ExtToCollar} gives the needed extension property, so it solves Donaldson's problem. 

  The proof of Theorem \ref{ExtToCollar} is inspired by the beautiful proof of the collar neighborhood theorem of \cite{HiNa} for complex manifolds with strictly pseudoconvex boundary. I am deeply indebted  to Mauro Nacinovich for answering in detail my questions   and clearing up a technical difficulty   in the proof  of \cite[Theorem 1]{HiNa}; the same difficulty occurs  in general   when one uses Zorn's lemma to prove extension theorems for objects defined on manifolds with boundary. The Appendix   explains in detail this difficulty in a  general framework which includes both extensions problems; the main result of this section (Lemma \ref{Nac})  is essentially due to  Nacinovich \cite{Nac}. \vspace{1mm}      
  
  Note that the conclusion of Theorem \ref{ExtToCollar} holds for any, not necessarily reductive, complex Lie group $G$ and for any  open, not necessarily relatively compact, submanifold $X$ with smooth, strictly pseudoconvex boundary of a complex manifold $\Omega$.    Moreover,   our formal integrability condition for a bundle ACS does not require the existence of  formally holomorphic local trivializations of $\bar P$ at boundary points $x\in\partial\bar X$. This {\it strong integrability condition}  is required in Donaldson's definition of a holomorphic bundle on a complex manifold with boundary \cite[p. 91]{Do} so, in fact,  it is part of the hypothesis of the original Donaldson's problem. On the other hand, under the assumptions  of Theorem \ref{ExtToCollar}, this conditions is superfluous, because  (see section \ref{LocTr}):
  
  \newtheorem*{Th2}{Proposition \ref{Co1}}
  \begin{Th2}
  Let $\bar X$ be a complex manifold with boundary, $G$ be a complex Lie group and $p:\bar P\to \bar X$ be a principal $G$-bundle on $\bar X$ endowed with a formally integrable bundle ACS $J$. Let $x\in \partial\bar X$ and $y\in \bar P_x$. If $\bar X$ is (weakly) pseudoconvex around $x$, there exists an open neighborhood 	$U$ of $x$ in $\bar X$ and a formally holomorphic section $s:U\to \bar P$ such that $s(x)=y$.	
  \end{Th2}
  In other words {\it any} formally integrable bundle ACS on a bundle over a complex manifold with pseudoconvex boundary satisfies Donaldson's strong integrability condition.
  \vspace{1mm}

  We now recall the motivation behind Donaldson's problem: to give a gauge-theoretical interpretation of the quotient   of the group of smooth maps $\partial\bar X\to\GL(r,\C)$ by the subgroup of those maps which admit a formally holomorphic extension $\bar X\to \GL(r,\C)$ (see \cite[p. 102]{Do}). Using  Theorem \ref{ExtToCollar} and the generalization of Donaldson's  \cite[Theorem 1]{Do} to the Hermitian framework \cite{Xi}, one obtains the following generalization of this isomorphism theorem:
  \newtheorem*{Th3}{Theorem \ref{IsoGenNew}}
  \begin{Th3}
  Let $K$ be a compact Lie group and $G$ be its complexification. Let $\bar X=X\cup\partial \bar X$ be a compact Stein manifold with boundary. Endow $\bar X$ with any (not necessarily Kählerian) Hermitian metric $g$.
 
  Let  	${\cal O}^\infty(\bar X,G)$ be the group of formally holomorphic $G$-valued   maps on $\bar X$, identified with a subgroup of ${\cal C}^\infty(\partial\bar X,G)$ via the  restriction map. 
  
  There exists a natural bijection between the moduli space of boundary framed Hermitian-Yang-Mills connections on the trivial $K$-bundle on $(\bar X,g)$ and the quotient ${\cal C}^\infty(\partial\bar X,G)/{\cal O}^\infty(\bar X,G)$.	
  \end{Th3}

 Note that in this statement $\bar X$ is just an abstract complex manifold with boundary. However, by the collar neighborhood theorem mentioned above (see \cite{HiNa} for the general case and \cite[Theorem p. 706]{NO}, \cite{Oh},   \cite[Theorem 6]{Ca2} for the compact case),  the proof of Theorem \ref{IsoGenNew}   can make use of Theorem \ref{ExtToCollar}.
   %
 \vspace{1mm}
 
 The moduli spaces intervening in Theorem \ref{IsoGenNew} will play an important role in a joint project developed in collaboration with Matei Toma  dedicated to the generalization of the concept "bounded family of coherent sheaves" in non-algebraic complex geometry. We will be especially interested in the case when $\bar X$ is a compact neighborhood of a 2-codimensional analytic set in a complex $n$-manifold.  For $n\geq 3$, such a neighborhood will not be pseudoconvex in general. Therefore we come to the following two natural questions:\vspace{1mm}
 \begin{enumerate}[(Q1)]
 \item What is the role of Donaldson's strong integrability condition in the theory? \vspace{-2mm}
 	\item To what extent is the pseudoconvexity assumption in Proposition  \ref{Co1} necessary? If $\bar X$ is not assumed to be pseudoconvex, what can be said about the abundance of the ACS which satisfy this strong integrability condition within the whole space of formally integrable   bundle ACS  on a bundle $\bar P$? 
 \end{enumerate}
 \vspace{3mm}
  Concerning (Q1): A complex manifold with boundary $\bar X$ has the structure of a locally ringed space: it can be endowed with the sheaf ${\cal O}^\infty_{\bar X}$  of formally holomorphic ${\cal C}^\infty$-functions. The restriction of this sheaf to $X$ coincides with ${\cal O}_X$ and its restriction to $\partial\bar X$ always contains the constant sheaf $\underline{\C}_{\partial\bar X}$.  
  
  The  sheaf ${\cal E}^\infty$ of formally holomorphic sections of a rank $r$ vector bundle $E$ on $\bar X$ endowed with a formally integrable ACS (Dolbeault operator) is naturally a sheaf of ${\cal O}^\infty_{\bar X}$-modules. 
  \begin{re} \begin{enumerate}
  \item The following conditions are equivalent: 
  \begin{enumerate}
  	\item $E$ satisfies Donaldson's strong integrability condition.
  	\item The natural map ${\cal E}^\infty_x\to E_x$ is surjective for any $x\in \partial\bar X$.
  \end{enumerate}
 \item If one of these two equivalent conditions is satisfied,  ${\cal E}^\infty$ is locally free of rank $r$.
 \item  The assignment $E\mapsto {\cal E}^\infty$ defines an equivalence   between the groupoid of formally holomorphic vector bundles satisfying the strong integrability condition and the groupoid of locally free sheaves of ${\cal O}^\infty_{\bar X}$-modules.	
    \end{enumerate}
  \end{re}
  
  This remark shows that Donaldson's strong integrability condition for formally integrable bundle ACS is very natural: it defines the class of vector bundles which (as do holomorphic bundles in classical complex geometry) correspond to locally free sheaves on the base manifold, regarded as a ringed space.
  \vspace{1mm}  
  
  On the other hand {\it neither} the main result of \cite{Do} {\it nor} its generalization to the Hermitian framework really needs this strong integrability condition. One can see this in \cite[p. 317-318]{Xi}: a local holomorphic frame is only used to give explicit formulae for the connection matrix of a Chern connection (formula (2.2)) or for the Hermitian-Einstein flow (formula (2.6')). These explicit formulae show that the Hermitian-Einstein flow is a semi-linear strictly parabolic system. But similar (slightly more complicated) formulae are obtained using an arbitrary smooth local frame. More precisely, let $E$ be a rank $r$ vector bundle on $\bar X$ endowed with a formally integrable Dolbeault operator $\delta$,  $(f_1,\dots,f_r)$ be a smooth frame defined on an open set $U\subset\bar X$ and $\alpha\in A^{01}(U,\gl(r\C))$ the matrix valued (0,1)-form defined by
   $$
\delta(f_1,\dots,f_r)=(f_1,\dots,f_r)\alpha.
   $$ 
Putting  $H_{ij}=h(f_i,f_j)$ we see that, in the local frame $(f_1,\dots,f_r)$, the connection form of Chern connection is $H^{-1}\partial H- H^{-1}  \trp{\bar\alpha} H+\alpha$, and  the Hermitian-Einstein flow reads:
\begin{align*}
\dot H=&-2i\Lambda\bar\partial \partial H+2i\Lambda\big(\bar\partial H\wedge (H^{-1}\partial H)-\bar\partial H\wedge(H^{-1}\,\trp{\bar\alpha}\, H)+\bar\partial(\trp{\bar\alpha}) H +\,\trp{\bar\alpha}\wedge \bar\partial H\\
& -H\partial \alpha-\partial H\wedge \alpha+\trp{\bar\alpha}\, H \wedge\alpha- (H\alpha H^{-1})\wedge\partial H+(H\alpha H^{-1})\wedge \trp{\bar\alpha}\, H \big).	
\end{align*}
This is also a semi-linear, strictly parabolic system.
  \vspace{2mm} \\
Concerning (Q2): For a ${\cal C}^\infty$ vector bundle $E$ on a complex manifold with boundary $\bar X$, let $\Delta_E$ be the space of formally integrable Dolbeault operators on $E$; for $x\in\partial\bar X$ denote by  $\Delta_E^x$  the subspace of those $\delta\in\Delta_E$ which admit a formally $\delta$-holomorphic frame (satisfies the strong integrability condition) around $x$. The space we are interested in is the intersection $\Delta_E^{\rm si}\edf\bigcap_{x\in\partial\bar X} \Delta_E^x$ of formally integrable Dolbeault operators which are strongly integrable (admit formally holomorphic frames) around all boundary points.

The example below shows that, in general, on non-pseudoconvex manifolds, the strong integrability condition   defines a set which is infinite codimensional,  meagre (of first Baire category), but dense. Moreover,  in our example, a ``generic" formally integrable Dolbeault operator does {\it not} admit formally holomorphic frames  at {\it any} boundary point. 

\newtheorem*{PrIntro}{Proposition \ref{HoNew}}
\begin{PrIntro}
Let $\bar X$ be the complement of the standard ball $B\subset\C^2$	in $\P^2_\C$,   $L$ be a trivial ${\cal C}^\infty$ complex line bundle on $\bar X$. 
\begin{enumerate}
\item   The union $\bigcup_{x\in\partial\bar X}\Delta_L^x$ is  a  first  Baire category  subset of $\Delta_L$, in particular its subsets $\Delta_L^x$  (for $x\in \partial\bar X$), $\Delta_L^{\rm si}$ have the same property. 
\item  For any $x\in \partial\bar X$,  $\Delta_L^x$ is an infinite codimensional affine subspace of  $\Delta_L$.
\item $\Delta_L^{\rm si}$  is a first Baire category, infinite codimensional, but  dense  affine subspace of $\Delta_L$.
\end{enumerate}
	
\end{PrIntro}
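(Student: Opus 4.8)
The plan is to translate the three assertions into statements about $(0,1)$-forms and then feed them into the local solvability theory of the tangential Cauchy--Riemann operator on the strictly pseudoconcave sphere $\partial\bar X=\partial B$. First, fixing the trivialization of $L$, write a Dolbeault operator as $\delta=\bar\partial+a$. Since $L$ has rank one, $a\wedge a=0$, so the integrability condition $\delta^2=0$ is exactly $\bar\partial a=0$; hence $\Delta_L$ is the affine space $\bar\partial+Z^{0,1}(\bar X)$, where $Z^{0,1}(\bar X)=\ker\bigl(\bar\partial\colon A^{0,1}(\bar X)\to A^{0,2}(\bar X)\bigr)$ carries its natural Fréchet topology. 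A nowhere vanishing section $e^{-h}$ is formally $\delta$-holomorphic near $x$ iff $\bar\partial h=a$ there; so $\delta\in\Delta_L^x$ iff $a$ is $\bar\partial$-exact on a neighborhood of $x$ in $\bar X$ by a primitive smooth up to the boundary. Writing $E^x$ for the linear subspace of such $a$, we have $\Delta_L^x=\bar\partial+E^x$ and $\Delta_L^{\rm si}=\bar\partial+E^{\rm si}$ with $E^{\rm si}=\bigcap_x E^x$, so everything reduces to the geometry of $E^x$ and $E^{\rm si}$ inside $Z^{0,1}(\bar X)$.

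For the infinite codimension in (2), the geometric input is that, in contrast to the situation of Proposition \ref{Co1}, here $\bar X$ lies on the strictly pseudoconcave side of $S^3=\partial B$. If $a=\bar\partial h$ near $x$, restriction to the boundary gives $\iota^*a=\bar\partial_b(h|_{\partial\bar X})$, so the tangential part of $a$ is a germ of a $\bar\partial_b$-image at $x$. On a strictly pseudoconcave three-dimensional CR manifold, $\bar\partial_b$ in the top degree $(0,1)$ is the Hans Lewy operator, whose germ-level image at any point has infinite codimension (Lewy's non-solvability, Andreotti--Hill; cf.\ the circle of ideas in \cite{HiNa}). Transporting these germ-level obstructions through the restriction map $a\mapsto\iota^*a$, I would produce an infinite, linearly independent family of continuous functionals on $Z^{0,1}(\bar X)$ that annihilate $E^x$; this forces $E^x$ to be an infinite codimensional (linear, hence affine) subspace, which is (2).

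Statement (1) concerns a union over uncountably many $x$, but it collapses to a countable union. For $U$ open in $\bar X$ meeting $\partial\bar X$, set $F_U=\{a\in Z^{0,1}(\bar X): a|_U\in\bar\partial C^\infty(U)\}$; then $F_{U'}\subseteq F_U$ for $U'\subseteq U$ and $E^x=\bigcup_{U\ni x}F_U$. Choosing a countable basis $\{U_n\}$ of boundary neighborhoods yields $\bigcup_x E^x=\bigcup_n F_{U_n}$. The same Lewy obstruction, localized to $U_n$, places each $F_{U_n}$ inside a proper closed subspace of $Z^{0,1}(\bar X)$ (an intersection of kernels of continuous functionals), so $F_{U_n}$ is nowhere dense; a countable union of nowhere dense sets is of first Baire category, which is (1), and the subsets $\Delta_L^x$, $\Delta_L^{\rm si}$ inherit the property.

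Finally, (3): by (2) and (1), $E^{\rm si}\subseteq E^{x_0}$ is infinite codimensional and of first category, so only density remains. The key observation is that any $a\in Z^{0,1}(\bar X)$ admitting a $\bar\partial$-closed extension to an open neighborhood of $\bar X$ in $\P^2$ already lies in $E^{\rm si}$: at each boundary point such an $a$ is $\bar\partial$-exact on a full two-sided neighborhood by the interior Dolbeault--Grothendieck lemma, with smooth primitive. In particular every real-analytic $\bar\partial$-closed form is extendable (its $\bar\partial$ extends real-analytically and vanishes on $\bar X$, hence on a neighborhood by the identity theorem), so it lies in $E^{\rm si}$. Thus density reduces to the approximation statement that real-analytic (equivalently, neighborhood-extendable) $\bar\partial$-closed forms are dense in $Z^{0,1}(\bar X)$. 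I expect this approximation step, together with the precise construction and independence of the local Lewy obstruction functionals of the second paragraph, to be the main obstacles. The former is a Runge-type theorem for the $\bar\partial$-cohomology of the concave manifold $\P^2\setminus B$; note that $H^{0,1}(\bar X)$ is infinite dimensional, so $\bar\partial C^\infty(\bar X)$ alone is not dense and one genuinely needs the larger space of extendable forms. The latter requires controlling the tangential equation uniformly on a fixed boundary patch rather than merely as a germ, and checking that the germ-level obstructions pull back to independent continuous functionals through the boundary restriction map.
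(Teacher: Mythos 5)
Your reduction to $(0,1)$-forms, the identification $\Delta_L=\bar\partial+Z^{01}(\bar X)$, and the reduction of (1) to a countable union over a basis of boundary neighborhoods all match the paper. But the mechanism you propose for (1) and (2) --- ``an infinite, linearly independent family of continuous functionals on $Z^{01}(\bar X)$ that annihilate $E^x$'', and the claim that each $F_{U_n}$ sits inside a proper closed subspace --- cannot work, and this is visible from part (3) of the very statement you are proving: $\Delta_L^{\rm si}$, hence each $E^x\supseteq E^{\rm si}$, is \emph{dense} in $Z^{01}(\bar X)$, so the only continuous linear functional vanishing on $E^x$ is zero. The infinite codimension in (2) is purely algebraic, with no closed complement. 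The paper obtains it differently: it intersects the boundary-restricted obstruction space with the subspace $\Lambda$ of forms whose local representative (in the chart carrying $\bar\partial_S$ to the Lewy operator $L$) depends only on $t$, and invokes Lewy's theorem that local solvability of $Lu=f$ for such $f$ forces $f$ to be real analytic; analytic germs in $t$ have infinite algebraic codimension among smooth germs. Likewise, for (1) the local non-solvability of $L$ does not make the solvable set nowhere dense outright; the paper follows H\"ormander's scheme, writing the solvable set as a countable union of sets $M_N^\omega$ that are closed (by Schwartz/Alaoglu--Bourbaki weak compactness of $\mathcal{D}'_N(\omega)$) and have empty interior (by convexity and symmetry of $M_N^\omega$ together with one non-solvable right-hand side supported in $\omega$), and then transports first-categoricity back to $Z^{01}(\bar X)$ through the restriction $r$, which is \emph{open} by the surjectivity of $Z^{01}(\bar X)\to B^{01}$ and the Open Mapping Theorem. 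You need some version of this finer argument; the ``proper closed subspace'' shortcut is false.

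For (3) you correctly isolate density as the remaining issue, but your proposed route --- density of real-analytic, or neighborhood-extendable, $\bar\partial$-closed forms on $\bar X$, via a Runge-type theorem for the concave side --- is exactly what the paper says it does \emph{not} know how to prove (``it is not clear if this space is dense in $Z^{01}(\bar X)$''). The paper's workaround is to require analyticity only of the \emph{boundary restriction}: $r^{-1}(B^{01}_{\rm an})$ is dense because $B^{01}_{\rm an}$ is dense in $B^{01}$ and $r$ is open. For such a form one solves $\bar\partial_S u=\alpha|_S$ locally by Cauchy--Kovalevskaya and then upgrades tangential exactness to exactness on a one-sided neighborhood $W^-=W\cap\bar X$ via the Andreotti--Hill injectivity of $H^{01}(W^-)\to H^{01}(W\cap S)$ for a Stein $W$. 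This last step is the idea missing from your outline, and it is what lets one avoid the unproven approximation statement on which your version of (3) rests.
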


This shows that,  in general, on non-pseudoconvex manifolds, Donaldson's strong integrability condition might not define a locally closed subset of $\Delta_E$. This complication should be taken into account in the construction of  the moduli space of boundary framed holomorphic bundles on a non-pseudoconvex $\bar X$.

The proof of  Proposition \ref{HoNew} is related to   Lewy's celebrated example of a first order differential  operator $L$ with analytic coefficients on $\R^3$  such that, for "generic" smooth second term $f$,  the "inhomogeneous equation" $Lu=f$ is non-solvable on any non-empty open set \cite{Le}, \cite{Ho}.

\section{Strong integrability at pseudoconvex boundary points}\label{LocTr}

Let   $\bar X=X\cup\partial\bar X$ be a complex $n$-manifold with  boundary. As explained in the introduction, the formal integrability of  $j$ does not guarantee the existence of formally $j$-holomorphic charts around the boundary points \cite{Hi}. By the main result of \cite{Ca}, this difficulty  vanishes if we assume    (weak) pseudoconvexity of the boundary:

Let $x\in \partial\bar X$. We'll say that $\bar X$ has  pseudoconvex boundary around $x$ if there exists  an open neighborhood $U$ of $x$ in $X$ and a real smooth non-positive function $r$ on $U$ such that $r^{-1}(0)=\partial\bar X\cap U$, $d_ur\ne0$ for any $u\in \partial\bar X\cap U$, and 	$\partial\bar\partial r(a,\bar a)\geq 0$ for any $a\in T^{10}_{u,X}\cap T_{u,\partial\bar X}^\C$, $u\in \partial\bar X\cap U$. \cite[Theorem, p. 234]{Ca} can be reformulated as follows
\begin{thry}  \label{CaTh}
Suppose that $\bar X$ has  pseudoconvex boundary around $x$.  
There exists an open neighborhood $U$ of $x$ and a formally $j$-holomorphic embedding $f:U\to \C^n$.
\end{thry}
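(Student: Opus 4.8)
The plan is to deduce the statement directly from the main theorem of \cite{Ca}, to which it is essentially equivalent; since the paper itself announces this as a reformulation, the only work is to match the hypotheses and the conclusions of the two formulations, the analytic substance being supplied entirely by Catlin. First I would recall his setup: starting from a formally integrable almost complex structure defined on a one-sided neighborhood of a boundary point, whose induced CR structure on $\partial\bar X$ has positive semidefinite Levi form, he produces near that point a collection $f_1,\dots,f_n$ of functions annihilated by the associated $\bar\partial$-operator and whose differentials are $\C$-linearly independent at the point. This is precisely a local system of formally $j$-holomorphic coordinates, and it is here that the genuine difficulty (solvability of the relevant $\bar\partial$-system up to the boundary under weak pseudoconvexity) is resolved.

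Next I would check the translation of hypotheses. Our pseudoconvexity condition is phrased through a non-positive local defining function $r$ with $r^{-1}(0)=\partial\bar X\cap U$, $d_ur\neq 0$ on the boundary, and $\partial\bar\partial r(a,\bar a)\geq 0$ for every $a\in T^{10}_{u,X}\cap T_{u,\partial\bar X}^\C$. Since $T^{10}_{u,X}\cap T_{u,\partial\bar X}^\C$ is exactly the holomorphic tangent space of the boundary CR structure and the restriction of $\partial\bar\partial r$ to it is the Levi form, the semipositivity requirement is Catlin's weak pseudoconvexity, the sign of $r$ being chosen so that this is convexity seen from the side of $X$. I would record the comparison of orientation and sign conventions explicitly, since this is where formulations of pseudoconvexity routinely diverge.

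Finally I would translate the conclusion. Writing $f=(f_1,\dots,f_n)$, the map $f\colon U\to\C^n$ is formally $j$-holomorphic if and only if each component satisfies $\bar\partial_j f_k=0$, which is Catlin's conclusion; and the $\C$-linear independence of the $d_xf_k$ makes $d_xf$ a $\C$-linear isomorphism onto $T_{f(x)}\C^n$. By the inverse function theorem for maps of manifolds with boundary, after shrinking $U$ the map $f$ is a diffeomorphism onto a relatively open neighborhood of $f(x)$ in a half-space of $\C^n$, hence a formally $j$-holomorphic embedding; conversely any such embedding supplies Catlin's coordinates, so the two statements are equivalent. The only genuine obstacle for the present proof is therefore expository rather than mathematical: pinning down the conventions so that ``$\partial\bar\partial r\geq 0$ for non-positive $r$'' matches Catlin's Levi-form positivity, and confirming that the coordinate functions may be taken with independent differentials at the center so that the inverse function theorem applies.
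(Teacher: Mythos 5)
Your proposal matches the paper exactly: the paper gives no independent proof of Theorem \ref{CaTh}, stating only that it is a reformulation of \cite[Theorem, p.~234]{Ca}, which is precisely the reduction you carry out (and you additionally spell out the dictionary between the two formulations, which the paper leaves implicit). The only cosmetic slip is that after applying the inverse function theorem the image is a relatively open piece of a submanifold with boundary of $\C^n$ whose boundary is the hypersurface $f(\partial\bar X\cap U)$, not of a linear half-space, but this does not affect the argument.
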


By ``embedding" we mean  here an immersion which defines a homeomorphism on its image.  It follows that $f(U)$ is an $n$-dimensional  submanifold with boundary of $\C^n$  whose boundary (interior) is $f(\partial\bar X\cap U)$ (respectively $f(U\cap X)$) and the induced map $U\to  f(U)$ is a diffeomorphism which restricts to a biholomorphism between the interiors. 
As mentioned above,
\begin{re}
If we assume that $\bar X$ has {\it strictly} pseudoconvex boundary one can prove a stronger result: $\bar X$ admits a holomorphic collar	neighborhood, i.e. it can be embedded holomorphically in a complex manifold $\Omega$ \cite{HiNa}.
\end{re}

\begin{pr}	
\label{Co1} Let  $p:\bar P\to \bar X$ be principal $G$ bundle on $\bar X$ endowed with a formally integrable bundle ACS $J$.   
Let $x\in \partial\bar X$, $y\in \bar P_x$ and suppose that $\bar X$ has  pseudoconvex boundary around $x$.  There exists an open neighborhood 	$U$ of $x$ in $\bar X$ and a formally holomorphic section $s:U\to \bar P$ such that $s(x)=y$.
 \end{pr}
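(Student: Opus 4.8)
The plan is to deduce the statement from Theorem \ref{CaTh} applied not to the base $\bar X$ but to the \emph{total space} $\bar P$, equipped with the complex-manifold-with-boundary structure defined by $J$. First I would record that $(\bar P,J)$ is a complex manifold with boundary: $\bar P$ is a smooth manifold with $\partial\bar P=p^{-1}(\partial\bar X)$ (the fibre $G$ has no boundary), $J$ is a formally integrable ACS smooth up to $\partial\bar P$ by hypothesis, and $p$ is pseudo-holomorphic. The crucial point is that $\partial\bar P$ is (weakly) pseudoconvex around $y$. Let $r$ be a defining function for $\bar X$ around $x$ as in the definition preceding Theorem \ref{CaTh}, and set $\rho\coloneq r\circ p$ on $p^{-1}(U)$. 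Since $p$ is formally holomorphic, $\partial\bar\partial\rho=p^*(\partial\bar\partial r)$, so that for any $a\in T^{10}_{y,\bar P}\cap T^\C_{y,\partial\bar P}$ one has $dp(a)\in T^{10}_{x,X}\cap T^\C_{x,\partial\bar X}$ and
$$\partial\bar\partial\rho(a,\bar a)=\partial\bar\partial r\big(dp(a),\overline{dp(a)}\big)\geq 0.$$
Together with $\rho\le 0$, $\rho^{-1}(0)=\partial\bar P\cap p^{-1}(U)$ and $d\rho=p^*(dr)\neq 0$ on $\partial\bar P$, this exhibits $\rho$ as a defining function witnessing the weak pseudoconvexity of $\partial\bar P$ around $y$.

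Next I would invoke Theorem \ref{CaTh} for $(\bar P,J)$ at $y$: setting $m\coloneq\dim_\C G$, there is a neighborhood $\mathcal U$ of $y$ in $\bar P$ and a formally holomorphic embedding $F=(F_1,\dots,F_{n+m}):\mathcal U\to\C^{n+m}$. Because $F$ is a pseudo-holomorphic immersion and each $F_j$ is formally holomorphic, the $(1,0)$-forms $\partial F_1,\dots,\partial F_{n+m}$ form a basis of $(T^{10}_{y,\bar P})^*$. The idea is to cut out the image of the desired section as a formally holomorphic submanifold transverse to the fibres, using $m$ suitably chosen components of $F$. Let $V\coloneq T^{10}_{y}(\bar P_x)$ be the $m$-dimensional tangent space to the fibre through $y$; restriction to $V$ gives a surjection $(T^{10}_{y,\bar P})^*\to V^*$ whose kernel is $dp^*\big((T^{10}_{x,X})^*\big)$. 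Since the $\partial F_j$ span $(T^{10}_{y,\bar P})^*$, after relabelling I may assume that $\partial F_{n+1}|_V,\dots,\partial F_{n+m}|_V$ form a basis of $V^*$, and I put $h_i\coloneq F_{n+i}-F_{n+i}(y)$ for $1\le i\le m$.

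Then $\Sigma\coloneq\{h_1=\dots=h_m=0\}$ is, after shrinking $\mathcal U$, a formally holomorphic submanifold of $\bar P$ of complex dimension $n$ containing $y$, and by construction $T^{10}_y\Sigma\cap V=0$, hence $T^{10}_y\Sigma\oplus V=T^{10}_{y,\bar P}$ and $dp$ restricts to an isomorphism $T_y\Sigma\to T_x\bar X$. Since $p$ maps $\partial\bar P$ into $\partial\bar X$ and the interior into the interior, $dp$ respects the boundary stratification, so the inverse function theorem for manifolds with boundary shows that, after a further shrinking, $p|_\Sigma$ is a formally holomorphic diffeomorphism onto a neighborhood $U$ of $x$ in $\bar X$. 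Its inverse, composed with the inclusion $\Sigma\hookrightarrow\bar P$, is the sought formally holomorphic section $s:U\to\bar P$ with $s(x)=y$.

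I expect the main obstacle to be the verification that $\partial\bar P$ inherits weak pseudoconvexity around $y$, i.e. the Levi-form computation above together with the care needed to set up $(\bar P,J)$ as a genuine complex manifold with boundary to which Theorem \ref{CaTh} legitimately applies. The remaining steps — selecting the components $h_i$ and applying the with-boundary inverse function theorem — are routine once one checks that $dp$ carries $T_y\partial\Sigma$ isomorphically onto $T_x\partial\bar X$ and inward normals to inward normals.
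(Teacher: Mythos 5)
Your proposal is correct and follows essentially the same route as the paper's proof: pull back the defining function to see that $\partial\bar P$ is pseudoconvex at $y$, apply Theorem \ref{CaTh} to the total space to get a formally holomorphic embedding $F$ near $y$, and slice by a codimension-$\dim_\C G$ formally holomorphic submanifold transversal to the fibre $\bar P_x$ at $y$ (the paper takes $F^{-1}$ of an affine subspace, you take the zero locus of $m$ relabelled components of $F$ — the same construction). Your added details (the explicit Levi-form computation and the inverse function theorem for manifolds with boundary) fill in steps the paper leaves implicit.
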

\begin{proof}
Regard $\bar P$ as complex manifold with  boundary $\partial \bar P=p^{-1}(\partial\bar X)$, and note that this boundary is pseudoconvex  around $y$.  To see this, use the pull-back of a boundary defining function $r$ for $\bar X$ around $x$. 

By Catlin's Theorem \ref{CaTh} there exists an open neighborhood $Q$ of $y$ in $\bar P$ and a formally $J$-holomorphic embedding $F:Q\to \C^N$, where $N=n+\dim(G)$. The formal holomorphy condition implies that the restriction of $F$ to any complex submanifold contained in the boundary is  holomorphic. Therefore we obtain a holomorphic embedding  $F|_{\bar P_x\cap Q}:\bar P_x\cap Q\to \C^N$. 

Let $A\subset \C^N$ be an $n$-dimensional affine subspace which  contains $F(y)$ and is transversal to  $F(\bar P_x\cap Q)$ at  this point. The pre-image via $F$ of the germ $(A,F(y))$ is the germ at $y$ of $n$-dimensional complex submanifold $S$ of $Q$ which is transversal to the fiber $\bar P_x$ at $y$. Therefore $S$ defines a holomorphic local section around $x$.
\end{proof}

\begin{co} \label{Co2}
Let $E$ be complex vector bundle on $\bar X$ endowed with a Dolbeault operator $\delta:\Gamma(E)\to \Gamma(\Lambda^{01}\otimes E)$ satisfying the formal integrability condition $\delta^2=0$. Then $E$ admits a formally $\delta$-holomorphic trivialization around any boundary 	point around which $\bar X$ has pseudoconvex boundary.
\end{co}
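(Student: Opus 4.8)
The plan is to reduce the vector-bundle statement to the principal-bundle result of Proposition \ref{Co1} via the frame bundle. I would set $\bar P\edf\mathrm{Fr}(E)$, the principal $\GL(r,\C)$-bundle of frames of $E$, where $r=\rk(E)$, so that $E$ is recovered as the associated bundle $\bar P\times_{\GL(r,\C)}\C^r$. As recalled in the introduction, for $G=\GL(r,\C)$ the datum of a Dolbeault operator $\delta$ on $E$ is equivalent to the datum of a bundle ACS $J$ on $\bar P$, and the formal integrability $\delta^2=0$ corresponds exactly to the formal integrability of $J$. I would therefore equip $\bar P$ with the formally integrable bundle ACS $J$ determined by $\delta$.

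Next, fix a boundary point $x\in\partial\bar X$ around which $\bar X$ is pseudoconvex, and choose any $y\in\bar P_x$, i.e. any frame of the fibre $E_x$. Proposition \ref{Co1} applies directly to $(\bar P,J)$ and yields an open neighborhood $U$ of $x$ in $\bar X$ together with a formally holomorphic section $s\colon U\to\bar P$ with $s(x)=y$. Writing $s$ tautologically as an $r$-tuple $(e_1,\dots,e_r)$ of sections of $E$, the section $s$ is nothing but a smooth frame of $E$ over $U$ specializing to the prescribed frame $y$ at $x$.

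The one substantive point that remains is to translate the formal $J$-holomorphy of the section $s$ into the $\delta$-holomorphy of the frame $(e_1,\dots,e_r)$, that is, into the equations $\delta e_i=0$ for $1\le i\le r$. This is precisely the infinitesimal content of the dictionary between bundle ACS on $\bar P$ and Dolbeault operators on $E$ invoked above: under this correspondence the $(0,1)$-directions distinguished by $J$ along the image of a section are cut out by the vanishing of $\delta$ applied to the component sections, so a section is pseudo-holomorphic exactly when each $e_i$ is annihilated by $\delta$. I expect this translation to be the only genuinely non-formal step; given the equivalence stated in the introduction it amounts to unwinding definitions, but it should be made explicit. Once it is in place, the frame $(e_1,\dots,e_r)$ is a formally $\delta$-holomorphic trivialization of $E$ over $U$, which is the assertion of the corollary.
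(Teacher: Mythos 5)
Your proposal is correct and follows exactly the paper's own route: pass to the frame bundle, use the correspondence between Dolbeault operators on $E$ and bundle ACS on the frame bundle (under which $\delta^2=0$ is formal integrability), and apply Proposition \ref{Co1} to obtain a formally holomorphic local section, i.e.\ a formally $\delta$-holomorphic frame. The paper's proof is a one-line version of the same argument; your added remark that the translation of $J$-holomorphy of the section into $\delta e_i=0$ should be made explicit is a fair point of exposition but not a different method.
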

\begin{proof}
 The claim follows from Proposition \ref{Co1} taking into account that, for a Dolbeault operator $\delta$ on $E$, the formal integrability condition for the associated  bundle ACS on the frame bundle $\bar P_E$, reads $\delta^2=0$.
\end{proof}

\begin{re}
As the  example studied in section \ref{SIAtBoundary} shows, the existence of a formal holomorphic  trivialization	around a boundary point does not hold without  the pseudoconvexity condition (even if $\bar X$ admits a holomorphic collar neighborhood). 
\end{re}

\section{Extension of holomorphic bundles to collar neighborhoods}\label{ExtToCollarSect}

The goal of this section is the proof of the following extension theorem:
\begin{thry}\label{ExtToCollar}
Let $(\Omega,j)$ be a complex manifold, and let $X\subset \Omega$  be an open submanifold whose closure $\bar X$ is a submanifold with smooth boundary.	 Let $G$ be a complex Lie group, $\Pi\textmap{\pi}\Omega$ be a differentiable principal $G$-bundle on $\Omega$ and $J$ be  a formally integrable bundle ACS on the restriction $\bar P\coloneq \Pi|_{\bar X}$

If the boundary of  $\bar X$ is strictly pseudoconvex,  there exists an open neighborhood $\Omega'$ of $\bar X$ in $\Omega$ and an integrable bundle ACS $J'$ on $\Pi|_{\Omega'}$ which extends $J$.  
\end{thry}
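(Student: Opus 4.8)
The plan is to integrate $J$ in the interior, reduce the extension to a purely local problem across the boundary, and then assemble the local extensions by a transfinite exhaustion of $\partial\bar X$. \emph{Reduction.} On $X$ the operator $J$ is formally integrable, hence integrable by the Newlander--Nirenberg theorem, so $\Pi|_X$ is already a holomorphic bundle and the only issue is to thicken $J$ across the strictly pseudoconvex hypersurface $\partial\bar X$ into an outward, pseudoconcave collar inside $\Omega$. Since strict pseudoconvexity implies weak pseudoconvexity, Proposition \ref{Co1} applies at every $x\in\partial\bar X$ and yields a formally $J$-holomorphic local section $s$; in the trivialization it defines, $J$ becomes the product bundle ACS $j\times J_G$. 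Extending $s$ to a smooth section on a neighborhood $\tilde U$ of $x$ in $\Omega$ and pushing forward $j\times J_G$ produces an \emph{integrable} bundle ACS $J_{\tilde U}$ on $\Pi|_{\tilde U}$ restricting to $J$ on $\tilde U\cap\bar X$. Hence local extensions always exist, and the whole difficulty is to make a coherent global choice.

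\emph{The gluing difficulty.} Two local extensions $J_{\tilde U}$ and $J_{\tilde V}$ agree with $J$ on $\bar X\cap\tilde U\cap\tilde V$, but --- and this is the heart of the matter --- they need \emph{not} agree on the collar part $(\tilde U\cap\tilde V)\setminus\bar X$: the identity principle fails for almost complex structures, their difference being a smooth $(0,1)$-form rather than a holomorphic object, and the transition function relating the two formally holomorphic frames, being CR on $\partial\bar X$, extends holomorphically only to the pseudoconvex side $X$, not to the pseudoconcave collar. Reconciling the local extensions is therefore the substantive step. Following the method of \cite{HiNa}, I would use strict pseudoconvexity twice: to push $\bar X$ outward while keeping it pseudoconvex, so that a genuine two-sided collar with good analytic properties is available, and to supply the analytic input needed to match the local extensions on overlaps. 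Concretely, working over a locally finite cover of $\partial\bar X$, I would build the collar extension piece by piece, correcting each new local extension so that it agrees with the part already constructed.

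\emph{Globalization, and the main obstacle.} If $\partial\bar X$ is compact a finite cover suffices and the assembly terminates; for possibly non-compact $\bar X$ I would run Zorn's lemma on the poset of partial extensions $(\Omega'',J'')$, where $\Omega''$ is an open neighborhood of a relatively open subset of $\partial\bar X$ and $J''$ an integrable extension of $J$, ordered by restriction. The \textbf{main obstacle} is exactly the upper-bound step: the union of a chain of partial extensions need not be defined on an \emph{open} neighborhood of $\bar X$, because the transverse width of the collars may shrink to zero along the chain, so the naive union is not an element of the poset. This is the technical point isolated in the Appendix and resolved by Lemma \ref{Nac} (due to Nacinovich), which provides the correct limit and a uniform collar. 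Granting it, a maximal element must cover all of $\partial\bar X$ --- otherwise the local construction of the first paragraph would extend it further near an uncovered point, contradicting maximality --- and glued with the interior structure on $X$ it yields the required neighborhood $\Omega'$ and integrable extension $J'$ of $J$.
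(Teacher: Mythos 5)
Your local step is correct and matches the paper's: Proposition \ref{Co1} gives a formally $J$-holomorphic section near a boundary point, a smooth extension of that section across the boundary transports the product structure, and one obtains an integrable bundle ACS on a full neighborhood in $\Omega$ restricting to $J$ near that point. The gap is in the globalization, in two places. First, your Zorn poset consists of actual pairs $(\Omega'',J'')$ with $\Omega''$ open in $\Omega$, and, as you yourself observe, a chain of such pairs need not have an upper bound because the collar width can shrink to zero along the chain. You then assert that Lemma \ref{Nac} repairs this, but it does not: that lemma addresses a different issue, namely the smoothness, at points of $\bar X\setminus O$, of the object obtained by gluing the given $J$ (smooth up to the boundary of $\bar X$) with an extension ${\cal J}$ defined on an open set $O\supset X$ --- the properties ${\bf P}_k$ there concern sequences of $O$ converging to points of $\bar X\setminus O$, not the uniformization of collar widths along a chain. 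Second, your ``correct each new local extension so that it agrees with the part already constructed'' is the substantive analytic step and you give no mechanism for it; as you note yourself, the transition functions only extend to the pseudoconvex side, so there is no reason two outward extensions can be matched on the pseudoconcave collar, and no such matching is ever performed in the paper.

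The paper dissolves both difficulties with one device that is absent from your proposal: it runs Zorn's lemma not on actual extensions but on continuous sections, over relatively open subsets $V$ of $\bar X$ containing $X$, of the \'etale space $\mathscr{J}$ of germs of locally defined integrable bundle ACS on $\Pi$. For that poset the upper-bound step is trivial (a union of compatible continuous sections of an \'etale space over an increasing family of open sets is again a continuous section), and Godement's theorem converts the maximal section over $V^{\max}$ back into an honest integrable ACS ${\cal J}$ on an open neighborhood $O$ of $V^{\max}$ in $\Omega$. Overlap agreement is then needed only at the level of germs along $\bar X$: the new local extension near an uncovered boundary point $x_0$ is built from a section that is formally holomorphic for the structure obtained by gluing $J$ with ${\cal J}$ on a bulged-out strictly pseudoconvex domain $\bar X^B_\varepsilon$ interpolating between $\bar X$ and $O$ (this is where strict pseudoconvexity and Lemma \ref{Nac} actually enter, the latter to guarantee that the glued structure is smooth on $\bar X^B_\varepsilon$), so it automatically has the same germ as ${\cal J}$ at every point of $V^{\max}$ near $x_0$, and the maximality of $V^{\max}$ is contradicted. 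Without the passage to germs, your scheme stalls at both the chain-limit and the overlap-matching steps.
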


Our proof is inspired by the proof of the collar neighborhood theorem for complex manifolds with strictly pseudo-convex boundary \cite{HiNa}, but uses a new ingredient: the étale space associated with a sheaf of sets and  Godement's theorem \cite[Théorème 3.3.1 p. 150]{Go}, which yields a  general extension principle for a section defined on a subset of the base of an étale space.  This allows us to give a simple construction of the inductively ordered set (to which Zorn's lemma is applied), which does not need factorization by an equivalence relation. Note also that in the proof we will point out the role of Lemma \ref{Nac}  to overcome the difficulty explained  in the appendix.
\vspace{2mm}

We start with a remark which will allow us to control the derivatives of a bundle ACS $J$ in a global way, without making use of charts and local trivializations.

Denoting by $V\subset T_\Pi$ the vertical tangent subbundle,  we obtain the short exact sequence 
$$0\to V\stackrel{j}{\longhookrightarrow} T_{  \Pi}\textmap{\pi_*} \pi^*(T_{\Omega})\to 0
$$
of   $G$-vector bundles on $\Pi$. Factorizing by $G$, we obtain the  short exact sequence
\begin{equation}\label{At}
0\to \ad(\Pi)\edf \Pi\times_{\Ad}\g\stackrel{\tilde j}{\longhookrightarrow} Q\edf T_{\Pi}/G\textmap{\tilde\pi_*}  T_{\Omega}\to 0	
\end{equation}
of  vector bundles on $\Omega$ (compare with \cite[Theorem  1, p. 187]{At}). 
\begin{re}\label{J-sJ}
The data of a bundle ACS $J$ on $\Pi$ is equivalent to the data of a  section $s_J\in\Gamma(\Omega, \End(Q))$ with $s_J^2=-\id_Q$ which makes the bundle morphisms $\tilde j$ and $\tilde\pi_*$ fiberwise $\C$-linear. Therefore one can use the formalism explained in Section \ref{appendix} to define globally the derivatives of  a bundle ACS $J$: one uses linear connections on $T^*_\Omega$  and $\End(Q)$ and the associated differential operators $D^k$  applied to   $s_J$.
\end{re}
\vspace{0mm}
\begin{proof} (of Theorem \ref{ExtToCollar})
Let $\mathscr{J}$ be the space of germs of integrable locally defined bundle ACS 	on $\Pi$. Therefore, as a set,  $\mathscr{J}=\coprod_{x\in \Omega}\mathscr{J}_x$ where 
$$\mathscr{J}_x=\underset{\substack{x\in U\subset \Omega\\ U {\rm open}}}{\varinjlim}\mathscr{J}(U)$$
and $\mathscr{J}(U)$ denotes the set of  integrable bundle ACSs 	on $\Pi|_U$. The set $\mathscr{J}$ comes with an obvious surjection $q:\mathscr{J} \to \Omega$.
For an element ${\cal J}\in \mathscr{J}(U)$ we denote by $\hat {\cal J}:U\to \mathscr{J}$ the associated   section of $q$ given by $\hat {\cal J}(u)={\cal J}_u$.
The topology of $\mathscr{J}$ is generated by the images of the sections of this form, and this topology makes $q$   local homeomorphisms. In the terminology of \cite[Section II.1.2]{Go}  $\mathscr{J}$ is the étale space associated with the sheaf of sets on $\Omega$ defined by  $U\mapsto \mathscr{J}(U)$. Note also that, for any open set $U\subset\Omega$, the map ${\cal J}\mapsto \hat {\cal J}$ identifies $\mathscr{J}(U)$ with the set $\Gamma(U,\mathscr{J})$ of continuous sections of $\mathscr{J}$ defined on $U$ \cite[Théorème 1.2.1, p 111]{Go}.

The restriction  $J_X$    of $J$ to $\bar P|_X=\Pi|_X$ defines a continuous section $\hat J_X:X\to \mathscr{J}$, but we have {\it no} obvious extension of this section to $\bar X$, because $J$, although is defined and smooth up to the boundary, does not give integrable bundle ACSs locally (with respect to $\Omega$) around boundary points.
\vspace{2mm}

Consider the set 
$${\cal R}\coloneq \big\{(V,\alpha)|\ V \hbox{ open in }\bar X,\  X\subset V\subset\bar X, \ \alpha\in\Gamma(V,\mathscr{J}),\ \alpha|_X=\hat J_X\big \}
$$
of {\it continuous} extensions of $\hat J_X$ to open (with respect to the relative topology) subsets of $\bar X$.  ${\cal R}$ is obviously non-empty and has an obvious partial order which   satisfies the hypothesis of  Zorn's lemma: every chain in ${\cal R}$ has an upper bound. Therefore, by this lemma, there exists a maximal element $(V^{\max},\alpha^{\max})$ of ${\cal R}$.

\begin{cl}
$V^{\max}=\bar X$.	
\end{cl}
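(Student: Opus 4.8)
The plan is to prove the Claim by contradiction via a standard ``open condition'' argument, combined with the extension tools already set up. Suppose $V^{\max}\subsetneq\bar X$; since $X\subset V^{\max}$ and $V^{\max}$ is open in $\bar X$, the complement $\bar X\setminus V^{\max}$ is a nonempty closed subset of $\bar X$ contained in the boundary $\partial\bar X$. Pick a boundary point $x\in\partial\bar X\cap(\bar X\setminus V^{\max})$. The goal is to produce an open neighborhood $W$ of $x$ in $\Omega$ together with an integrable germ of bundle ACS on $\Pi|_W$ that agrees with the section $\alpha^{\max}$ on the overlap $W\cap V^{\max}$, thereby gluing to a strictly larger continuous section and contradicting maximality.

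First I would localize. Around the boundary point $x$, strict pseudoconvexity lets me invoke the local theory from the collar-neighborhood proof of \cite{HiNa}: after choosing suitable coordinates on $\Omega$ adapted to a strictly pseudoconvex boundary defining function $r$, one solves the relevant extension/integrability problem in a small ball. Concretely, $J$ is a formally integrable bundle ACS defined and smooth up to $\partial\bar X$; on the boundary piece $\partial\bar X\cap U$ it already endows $\bar P$ with an integrable structure in the tangential directions, and the task is to extend the section $s_J\in\Gamma(\End(Q))$ (in the language of Remark \ref{J-sJ}) from $\bar X$-side to a genuine two-sided integrable structure on a full $\Omega$-neighborhood $W$ of $x$. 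This is exactly the step where the strict pseudoconvexity hypothesis is used: it is what makes the extension problem solvable, via the same analytic input (Kohn-type estimates / $\bar\partial$-Neumann solvability on strictly pseudoconvex domains) that underlies the Hill--Nacinovich argument. I expect this to be the main obstacle, and I would import it as a local existence lemma rather than reprove it.

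Next comes the gluing, and here is where the étale-space formalism and Godement's theorem do their work. The extended germ at $x$ and the section $\alpha^{\max}$ both define continuous sections of $\mathscr{J}$ over their respective domains; on $X\cap W$ they agree because both restrict to $\hat J_X$. The subtle point is that agreement on the \emph{interior} overlap $X\cap W$ must be promoted to agreement on the full overlap $V^{\max}\cap W$, which includes boundary points. This is precisely the technical difficulty flagged in the introduction: two continuous sections of an étale space that coincide on a dense subset need \emph{not} coincide, so one cannot naively glue. I would resolve it using Lemma \ref{Nac} from the appendix, which guarantees that, in this boundary setting, agreement on the interior forces agreement on the closure within the overlap; this identifies the two sections on $V^{\max}\cap W$ and lets them be glued into a single continuous section $\alpha'\in\Gamma(V^{\max}\cup W,\mathscr{J})$ extending $\hat J_X$.

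Finally, $(V^{\max}\cup W,\alpha')$ lies in $\cal R$ and strictly dominates $(V^{\max},\alpha^{\max})$, since $x\in W\setminus V^{\max}$. This contradicts the maximality of $(V^{\max},\alpha^{\max})$, so no such boundary point $x$ can exist and $V^{\max}=\bar X$, proving the Claim. To summarize the logical skeleton: \textbf{(i)} reduce to a boundary point outside $V^{\max}$; \textbf{(ii)} use strict pseudoconvexity to extend the integrable germ across that point into $\Omega$ (the hard analytic step, imported from the \cite{HiNa} local theory); \textbf{(iii)} use Lemma \ref{Nac} to upgrade interior agreement to boundary agreement so the étale sections glue; \textbf{(iv)} contradict maximality. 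The only genuinely nontrivial ingredient is step (ii); steps (iii) and (iv) are the payoff of the sheaf-theoretic packaging.
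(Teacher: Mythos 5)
Your skeleton (Zorn, contradiction at a boundary point $x_0\notin V^{\max}$, local construction near $x_0$, glue, contradict maximality) matches the paper's, but the two steps you label (ii) and (iii) contain the actual content, and both are wrong or missing as stated. The decisive issue is step (iii). A point of $\mathscr{J}$ over $u\in\partial\bar X$ is a germ of integrable ACS on a \emph{two-sided} $\Omega$-neighborhood of $u$, so for your new section to agree with $\alpha^{\max}$ on $V^{\max}\cap W$ you need the new ACS to coincide with a representative ${\cal J}$ of $\alpha^{\max}$ on a full $\Omega$-neighborhood of each $u\in\partial\bar X\cap V^{\max}\cap W$ --- and such points are unavoidable, since $x_0$ is in general a limit of points of $\partial\bar X\cap O$. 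Agreement on the interior overlap $X\cap W$ does \emph{not} force this: two integrable bundle ACS on an open subset of $\Omega$ can agree on a half-neighborhood and differ on the other side (holomorphic structures are not determined by restriction to an open subset). Lemma \ref{Nac} does not say otherwise; it is a statement about arranging the derivative-matching conditions ${\bf P}_k$ between a section on $\bar X$ and a section on $O$ so that the glued section is \emph{smooth} on an interpolating submanifold with boundary --- it is not an identity principle for germs. So your gluing step fails, and no cited result repairs it.

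The paper avoids this exactly where your argument breaks. First, Godement's theorem is used not for gluing but to realize $\alpha^{\max}$ as an honest integrable ACS ${\cal J}$ on an $\Omega$-open set $O$ with $\bar X\cap O=V^{\max}$ (you never produce such an $O$, yet you need it to have anything to agree with). Then, instead of extending $J$ across $\partial\bar X$ at $x_0$ directly, one builds an auxiliary strictly pseudoconvex domain $\bar X^B_\varepsilon=\{\rho-\varepsilon\varphi\le 0\}$ (with $\varphi^{-1}(0)=\Omega\setminus O$) whose boundary still passes through $x_0$ but which \emph{swallows} $\partial\bar X\cap O\cap B$ into its interior; Lemma \ref{Nac} and Remark \ref{smooth-extensions} are what make $J$ and ${\cal J}$ glue to a smooth integrable ACS on this interpolating domain. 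Proposition \ref{Co1} (Catlin, i.e.\ not the $\bar\partial$-Neumann machinery of \cite{HiNa} you propose to import) then gives a formally holomorphic frame at $x_0$ for the glued structure; extending that frame smoothly across $\partial\bar X^B_\varepsilon$ and declaring it holomorphic defines the new ACS, which agrees with ${\cal J}$ on the $\Omega$-open set $X^B_\varepsilon\cap\tilde U\supset V^{\max}\cap\tilde U$ because the frame is ${\cal J}$-holomorphic there. That containment --- a consequence of the $\rho-\varepsilon\varphi$ perturbation --- is precisely what delivers germ-level agreement on the whole overlap, and it is the idea your proposal is missing.
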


The first step for proving the claim  uses \cite[Théorème 3.3.1 p. 150]{Go}. According to this theorem\footnote{The hypothesis of Godement's theorem requires the existence of fundamental system of paracompact neighborhoods of $V^{\max}$ in $\Omega$. Note that, in general, an open subspace of a paracompact space is not necessarily paracompact. In this article all manifolds are supposed second-countable by definition. This condition implies paracompactness and is ``hereditary" with respect to open embeddings. }, there exists an open set $O$ of $\Omega$ containing $V^{\max}$ and ${\cal J}\in \mathscr{J}(O)$ such that $\hat{\cal J}|_{V^{\max}}=\alpha^{\max}$. The maximality property of $(V^{\max},\alpha^{\max})$ implies %
\begin{equation}\label{IntersO}
\bar X\cap O =V^{\max}.	
\end{equation}
Note now that $J|_X= {\cal J}|_X$, so the corresponding sections (see Remark \ref{J-sJ})
$$ s_J\in\Gamma(\bar X,\End(Q)),\  s_{\cal J}\in\Gamma(O,\End(Q))$$
agree on $X$, and the  results of Section \ref{appendix} apply.  According to Lemma \ref{Nac}  we may suppose (by replacing $O$ by a smaller open set still satisfying (\ref{IntersO}), if necessary) that the triple $(s_J,O,s_{\cal J})$ satisfies property {\bf P}$_k$ for any $k\geq 0$. We will need this later in the proof.    \vspace{2mm}

Assume now by reductio ad absurdum that the claim is false and let $x_0\in \bar X\setminus V^{\max}$. Since $V^{\max}\supset X$, we have $x_0\in \partial\bar X$, so, by (\ref{IntersO}), it follows
\begin{equation}\label{x}
x_0\in \partial \bar X\setminus O.	
\end{equation}

  By  Whitney's theorem there exists a smooth non-negative real function $\varphi$ on $\Omega$ such that $\varphi^{-1}(0)=\Omega\setminus O$. 
  
  Let  $B$ be a relatively compact open neighborhood of $x_0$ in $\Omega$ and $\rho:B\to \R$ be a smooth, strictly  plurisubharmonic defining function for $X\cap B$.
   in other words $\rho^{-1}((-\infty,0))=X\cap B$, $\rho^{-1}(0)=\partial\bar X\cap B$ and $d\rho$ is nowehere vanishing on this intersection.  We may suppose that  $\rho$ has a smooth extension around $\bar B$ which is a strictly  plurisubharmonic submersion. 

For sufficiently small $\varepsilon>0$ the function  $\rho_\varepsilon\coloneq \rho-\varepsilon\varphi\in {\cal C}^\infty(B,\R)$ will still be  strictly  plurisubharmonic submersion, so $X_\varepsilon^B\coloneq \rho_\varepsilon^{-1}((-\infty,0))$ will be the interior of a strictly pseudoconvex manifold   $\bar X_\varepsilon^B \coloneq \rho_\varepsilon^{-1}((-\infty,0])$ with  boundary  $  \partial\bar X_\varepsilon^B= \rho_\varepsilon^{-1}(0)$. The superscript $B$ on the left emphasizes that the closure is taken with respect to the relative topology of $B$. \vspace{1mm}

Note that the submanifolds $X_\varepsilon^B$, $\bar X_\varepsilon^B$ of $B$ have the properties
\begin{equation}\label{WB}
\bar X\cap O\cap B\subset X^B_\varepsilon\subset O\cap B	.
\end{equation}
\begin{equation} \label{WBnew}
\bar X_\varepsilon^B	\subset (\bar X\cup O)\cap B.
\end{equation}
Indeed, for the first inclusion in (\ref{WB}) note that, for a point $w\in \bar X\cap O$, we have $\varphi(w)>0$ because, by construction, $\varphi$ is positive on $O$, and $\rho(w)\leq 0$  because $x\in  \bar X$. For the second inclusion  in (\ref{WB}) note that, if a point $x\in   X^B_\varepsilon$ does not belong to $O$, then $\varphi(x)=0$, so the condition $\rho_\varepsilon(x)< 0$ becomes $\rho(x)< 0$ which implies $x\in X\subset O$. A similar argument proves  (\ref{WBnew}). 

Formulae (\ref{WB}), (\ref{WBnew}) show that $X^B_\varepsilon\subset O\cap B$ and 
$$X\cap B\subset (\bar X\cap B)\cap (O\cap B)\subset X^B_\varepsilon\subset \bar X^B_\varepsilon\subset (\bar X\cap B)\cup (O\cap B), $$
so $\bar X^B_\varepsilon$ interpolates between $\bar X\cap B$ and $O\cap B$ in the sense of Definition \ref{interpdef} given in the Appendix (see Fig. \ref{picture}).
We know that the triple  $(O\cap B, s_{\cal J}|_{O\cap B})$ satisfies property {\bf P}$_k$ for any $k\geq 0$. By Remark \ref{smooth-extensions} explained in the appendix it follows that there  exists a {\it smooth} almost complex structure $J_\varepsilon$ on $\Pi|_{\bar X^B_\varepsilon}$ which agrees with $J$ on $\bar  X\cap B$ and with ${\cal J}$ on $X^B_\varepsilon$.  Note that, as explained in the appendix, the smoothness of  $J_\varepsilon$ does not follow using only the smoothnesses  of ${\cal J}$ and $J$.   

Note that $J_\varepsilon$ is formally integrable, because $J$ and ${\cal J}$ have this property.  
We have $x_0\in   \partial\bar X_\varepsilon^B$, because, by (\ref{x}), $\rho(x)=\varphi(x)=0$.  By Proposition \ref{Co1} applied to    $(\Pi_{\bar X_\varepsilon^B},J_\varepsilon)$  there exists an open neighborhood $U$  of $x_0$ in $\bar X_\varepsilon^B$ and a smooth section  $s_\varepsilon\in \Gamma(U,\Pi)$ which is formally holomorphic with respect to $J_\varepsilon$. Since $J_\varepsilon$ agrees with  ${\cal J}$ on $X^B_\varepsilon$, it follows that $s_\varepsilon$ is ${\cal J}$ holomorphic on $X_\varepsilon^B\cap U$.

\begin{figure}[h!]
\includegraphics[scale=0.7]{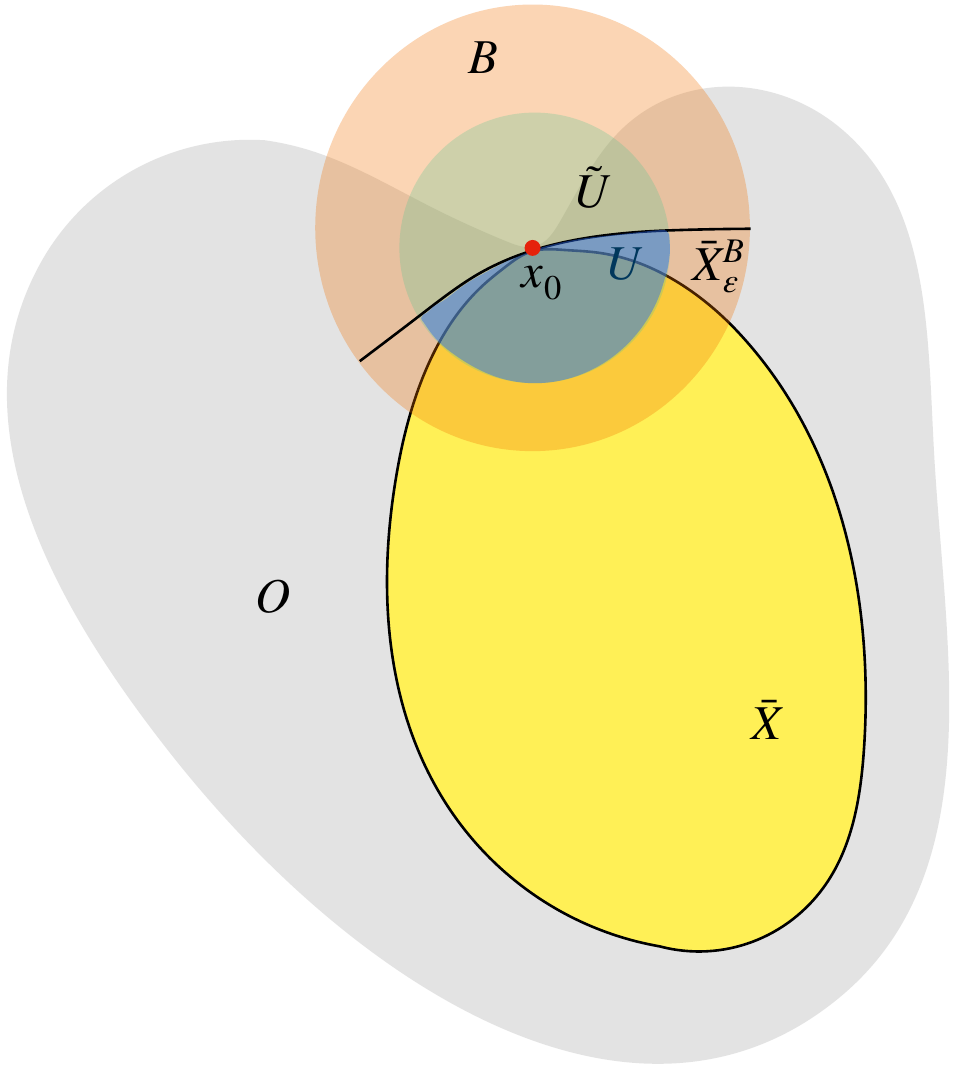}
\caption{\small The picture in the case $V^{\max}=\bar X\setminus \{x_0\}$.} 
\label{picture}	
\end{figure}

By Remark \ref{SmoothOnBarX}, the section $s_\varepsilon$ can be smoothly extended across the boundary, i.e.  there exists a  smooth extension $\tilde s_\varepsilon\in\Gamma(\tilde U,\Pi)$ of $s_\varepsilon$ to   an open set $\tilde U$ of $B$ such that $\bar X_\varepsilon^B\cap\tilde U=U$. 

Endow the restriction $\Pi|_{\tilde U}$ with the integrable bundle ACS $J_{\tilde s_\varepsilon}$  which makes $\tilde s_\varepsilon$ holomorphic.  The section   $\tilde s_\varepsilon$ is ${\cal J}$-holomorphic on $X_\varepsilon^B\cap U$, because on this set  it coincides with $s_\varepsilon$;  it follows that $J_{\tilde s_\varepsilon}$  agrees with ${\cal J}$ above this {\it  open} set, so
\begin{equation}\label{agrees}
\hat J_{\tilde s_\varepsilon}|_{X_\varepsilon^B\cap U}	=\hat {\cal J}|_{X_\varepsilon^B\cap U}.
\end{equation}

On the other hand  we have $V^{\max}\cap\tilde U\subset V^{\max}\cap B\subset X_\varepsilon^B$ (we used  (\ref{IntersO}) and (\ref{WB})), so $V^{\max}\cap\tilde U\subset X_\varepsilon^B\cap\tilde U= X_\varepsilon^B\cap\bar X_\varepsilon^B\cap\tilde U=X_\varepsilon^B\cap U$, so (\ref{agrees}) gives
$$
\hat J_{\tilde s_\varepsilon}|_{V^{\max}\cap\tilde U}=  \hat {\cal J}|_{V^{\max}\cap\tilde U}=\alpha^{\max}|_{V^{\max}\cap\tilde U}.
$$

Therefore $\hat J_{\tilde s_\varepsilon}$   defines an extension of $\alpha^{\max}$ on $V^{\max}\cup \tilde U$, so on $V^{\max}\cup (\tilde U\cap\bar X)$; this latter extension is continuous, because  $\alpha^{\max}$ and $\hat J_{\tilde s_\varepsilon}$ are continuous and $V^{\max}$, $\tilde U\cap\bar X$ are  open in $\bar X$. Since  $x_0\in V^{\max}\cup (\tilde U\cap\bar X)$, this contradicts the maximality of $\alpha^{\max}$, and the claim is proved. \vspace{2mm}

We now know that $V^{\max}=\bar X$. But then $\Omega'\coloneq O$ is an open neighborhood of $\bar X$ in $\Omega$, and $J'\coloneq{\cal J}$ is an integrable bundle ACS on $\Pi_{\Omega'}$ which extends $J$.
\end{proof}

As explained in the introduction, the motivation behind the extension Theorem \ref{ExtToCollar} is the following generalization of Donaldson's gauge theoretical interpretation of the quotient ${\cal C}^\infty(\partial\bar X,\GL(r,\C))/{\cal O}^\infty(\bar X,\GL(r,\C))$ associated to a compact, strictly pseudoconvex submanifold with boundary in $\C^n$ (see \cite[p. 102]{Do}):

 \begin{thry}\label{IsoGenNew}
 Let $K$ be a compact Lie group and $G$ be its complexification. Let $\bar X=X\cup\partial \bar X$ be a compact Stein manifold with boundary. Endow $\bar X$ with any (not necessarily Kählerian) Hermitian metric $g$.
 
  Let  	${\cal O}^\infty(\bar X,G)$ be the group of formally holomorphic $G$-valued   maps on $\bar X$, identified with a subgroup of ${\cal C}^\infty(\partial\bar X,G)$ via the  restriction map. 
  
  There exists a natural bijection between the moduli space of boundary framed Hermitian-Yang-Mills connections on the trivial $K$-bundle on $(\bar X,g)$ and the quotient ${\cal C}^\infty(\partial\bar X,G)/{\cal O}^\infty(\bar X,G)$.

 \end{thry}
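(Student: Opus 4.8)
The plan is to realize both sides of the claimed bijection as quotients of the complex gauge group of the trivial bundle and to match them through the Kobayashi--Hitchin correspondence relative to the boundary. Write ${\cal G}^\C\edf{\cal C}^\infty(\bar X,G)$ and ${\cal G}\edf{\cal C}^\infty(\bar X,K)$ for the complex and unitary gauge groups of the trivial bundle $E$ (with its standard Dolbeault operator $\bar\partial_0$ and standard metric $h_0$), and set ${\cal G}^\C_\partial\edf\{g\in{\cal G}^\C:g|_{\partial\bar X}=\id\}$, ${\cal G}_\partial\edf{\cal G}^\C_\partial\cap{\cal G}$ for their boundary-trivial subgroups. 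Since the restriction map ${\cal G}^\C\to{\cal C}^\infty(\partial\bar X,G)$ is surjective with kernel the normal subgroup ${\cal G}^\C_\partial$, and since a smooth boundary map extends holomorphically precisely when it lies in the image of ${\cal G}^\C_\partial\cdot{\cal O}^\infty(\bar X,G)$, a purely group-theoretic computation identifies the target as a double coset space:
\begin{equation*}
{\cal C}^\infty(\partial\bar X,G)/{\cal O}^\infty(\bar X,G)\;=\;{\cal G}^\C_\partial\backslash{\cal G}^\C/{\cal O}^\infty(\bar X,G),
\end{equation*}
where on the right ${\cal O}^\infty(\bar X,G)\subset{\cal G}^\C$ denotes the genuine holomorphic gauge transformations.

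First I would reduce the space of holomorphic structures to the right-hand factor. By the collar neighborhood theorem \cite{HiNa} the compact Stein manifold with boundary $\bar X$ embeds in a complex manifold $\Omega$, and by Theorem \ref{ExtToCollar} the given formally integrable bundle ACS on the trivial $G$-bundle extends to an integrable one on a neighborhood of $\bar X$; shrinking $\Omega$ to a sublevel set of a strictly plurisubharmonic exhaustion we may assume $\Omega$ is Stein. Grauert's Oka principle then shows that the topologically trivial holomorphic $G$-bundle is holomorphically trivial, so every holomorphic structure on the trivial bundle lies in the ${\cal G}^\C$-orbit of $\bar\partial_0$. Because the stabilizer of $\bar\partial_0$ is exactly ${\cal O}^\infty(\bar X,G)$, the assignment $g\mapsto g\cdot\bar\partial_0$ identifies the set of holomorphic structures with ${\cal G}^\C/{\cal O}^\infty(\bar X,G)$.

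The analytic core is the Hermitian generalization \cite{Xi} of Donaldson's theorem \cite[Thm.~1]{Do}: for the fixed boundary metric $h_0|_{\partial\bar X}$ and any holomorphic structure, the Dirichlet problem for the Hermitian--Yang--Mills equation admits a unique solution. I would package this as a Kobayashi--Hitchin statement \emph{relative to the boundary}, asserting that passing to the $(0,1)$-part descends to a bijection between unitary HYM connections on $(E,h_0)$ modulo ${\cal G}_\partial$ and holomorphic structures modulo ${\cal G}^\C_\partial$: existence yields that each boundary-trivial complex gauge class of holomorphic structures is realized by a HYM connection (surjectivity), while uniqueness yields that two HYM connections whose holomorphic structures differ by an element of ${\cal G}^\C_\partial$ already differ by an element of ${\cal G}_\partial$ (injectivity). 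In the present framing convention the moduli space ${\cal M}$ of boundary framed HYM connections is precisely the set of unitary HYM connections on $(E,h_0)$ modulo ${\cal G}_\partial$, so this step gives ${\cal M}\cong{\cal G}^\C_\partial\backslash\big({\cal G}^\C/{\cal O}^\infty(\bar X,G)\big)={\cal G}^\C_\partial\backslash{\cal G}^\C/{\cal O}^\infty(\bar X,G)$. Combining with the displayed double-coset identity produces the asserted natural bijection.

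The main obstacle is the relative Kobayashi--Hitchin step: one must show that the non-unitary directions tangent to a ${\cal G}^\C_\partial$-orbit are exactly absorbed by moving to the unique HYM representative, with full control up to the boundary, and that the boundary framing is genuinely preserved in this passage (so that the bookkeeping of ${\cal G}^\C_\partial$, ${\cal G}_\partial$ and ${\cal O}^\infty(\bar X,G)$ closes up). This is precisely where the non-Kähler hypothesis forces the use of \cite{Xi} in place of the classical Donaldson--Uhlenbeck--Yau theory, and where the conversion between ``metric varying, $h_0$ fixed on $\partial\bar X$'' and ``$h_0$ fixed, holomorphic structure varying'' must be carried out carefully. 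The remaining verifications --- independence of the chosen holomorphic trivialization from Grauert's theorem and of the smooth extension of a boundary datum, together with naturality of the bijection --- are then formal.
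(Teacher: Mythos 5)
Your two substantive ingredients coincide with the paper's: the global formally holomorphic trivializability of any formally integrable bundle ACS on $\bar X\times G$ (collar neighborhood theorem, Theorem \ref{ExtToCollar}, passage to a Stein sublevel set, Grauert's Oka principle), and the Donaldson--Xi boundary value theorem for the Hermitian--Yang--Mills equation taken as a black box. The gap is in the group-theoretic packaging. You assert that the restriction map ${\cal G}^\C={\cal C}^\infty(\bar X,G)\to{\cal C}^\infty(\partial\bar X,G)$ is surjective; both the displayed identification of the target with the double coset space ${\cal G}^\C_\partial\backslash{\cal G}^\C/{\cal O}^\infty(\bar X,G)$ and the identification of the boundary framed moduli space with HYM connections modulo ${\cal G}_\partial$ rest on this (the latter on its unitary analogue). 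In the stated generality this is false: a smooth boundary map need not extend to $\bar X$ as a $G$-valued map. For instance $\bar X=\{1\le|z|\le2\}\subset\C$ is a compact Stein manifold with boundary in the paper's sense (take $\rho=\log|z|\,(\log|z|-\log 2)$, which is strictly subharmonic, nonpositive, and has $0$ as a regular value), and for $K=U(1)$, $G=\C^*$ a map $\partial\bar X\to\C^*$ with different winding numbers on the two boundary circles admits no continuous extension to the annulus; analogous obstructions occur in higher dimension whenever $H^1(\bar X;\pi_1(G))\to H^1(\partial\bar X;\pi_1(G))$ fails to be onto, or when $\pi_0(G)\neq 1$. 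As written, your chain of identifications therefore only yields a bijection between the extendable boundary values modulo ${\cal O}^\infty(\bar X,G)$ and the framed HYM connections whose framing extends --- a priori proper subsets of the two sides of the theorem.

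The repair is exactly the paper's bookkeeping: do not normalize the framing. Keep the moduli space as pairs $(J,\theta)$, with $J$ a formally integrable bundle ACS on $\bar X\times G$ and $\theta:\partial\bar X\to G$ an arbitrary boundary trivialization, modulo the full gauge group ${\cal C}^\infty(\bar X,G)$, and define the map directly by $[f]\mapsto[(J_0,f)]$. Injectivity is then the observation that the stabilizer of $J_0$ is ${\cal O}^\infty(\bar X,G)$, surjectivity is precisely your Grauert step (every $(J,\theta)$ is gauge equivalent to some $(J_0,f)$ with $f$ an arbitrary, possibly non-extendable, boundary map), and no surjectivity of the restriction map is needed anywhere. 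With that change the rest of your argument is the paper's proof.
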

 
  We recall from \cite{Do} that a (differentiably trivial) boundary framed holomorphic bundle on $\bar X$ is a pair $(J,\theta)$ consisting of a formally integrable bundle ACS on the trivial ${\cal C}^\infty$-bundle $\bar X\times G$ and a smooth section (or, equivalently, a trivialization) of the restriction of this bundle to $\partial\bar X$. Let ${\cal M}$ be the moduli space of pairs $(J,\theta)$ as above, modulo the natural action of the gauge group $\Aut(\bar X\times G)={\cal C}^\infty(\bar X,G)$ on the space of such pairs. 
  
  Taking into account Donaldson's isomorphism theorem \cite[Theorem 1']{Do} which gives a gauge theoretical interpretation of the moduli space ${\cal M}$ in terms of boundary framed Hermitian-Yang-Mills connections (and the generalization of this theorem to the Hermitian framework), the claim of  Theorem \ref{IsoGenNew} will follow as in Donaldson's article from the following
  \begin{lm}
  The natural map $R:{\cal C}^\infty(\partial\bar X,G)/{\cal O}^\infty(\bar X,G)\to {\cal M}$ given by
  $$
  R([f])\edf[(J_0,f)],
  $$	
  where $J_0$ is the trivial holomorphic structure on $\bar X\times G$, is bijective.
  \end{lm}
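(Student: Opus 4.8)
The plan is to check that $R$ is well-defined, injective and surjective; the content is concentrated in surjectivity, where Theorem \ref{ExtToCollar} and Grauert's classification theorem on Stein manifolds are the essential tools.

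First I would make the gauge action explicit. Since $\bar X\times G$ is trivial, a gauge transformation is a map $u\in{\cal C}^\infty(\bar X,G)$ acting by left translation, and it sends a pair $(J,\theta)$ to $\bigl(u\cdot J,\ (u|_{\partial\bar X})\,\theta\bigr)$. The key structural fact is that $u\cdot J_0=J_0$ if and only if $u$ is formally holomorphic: for the trivial Dolbeault operator one has $u\cdot\bar\partial=u\,\bar\partial\,u^{-1}$, which equals $\bar\partial$ exactly when $\bar\partial u=0$. Thus the stabilizer of $J_0$ in the gauge group is precisely ${\cal O}^\infty(\bar X,G)$. Well-definedness and injectivity of $R$ then follow simultaneously: $(J_0,f)$ and $(J_0,f')$ lie in the same gauge orbit if and only if there is $u$ with $u\cdot J_0=J_0$ and $(u|_{\partial\bar X})f=f'$, that is, if and only if $f$ and $f'$ differ by the boundary restriction of some $u\in{\cal O}^\infty(\bar X,G)$ --- exactly the relation defining the quotient ${\cal C}^\infty(\partial\bar X,G)/{\cal O}^\infty(\bar X,G)$. (Here I use that restriction to $\partial\bar X$ is injective on ${\cal O}^\infty(\bar X,G)$, which is how the subgroup is formed.)

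The heart of the proof is surjectivity. Given $[(J,\theta)]\in{\cal M}$, it suffices to produce a gauge transformation $u\in{\cal C}^\infty(\bar X,G)$ with $u\cdot J=J_0$ and then set $f=(u|_{\partial\bar X})\theta$, for then $[(J,\theta)]=[(J_0,f)]=R([f])$. To build $u$: because $\bar X$ is a compact Stein manifold with boundary, its boundary is strictly pseudoconvex, so by the collar neighborhood theorem \cite{HiNa} I may realize $\bar X$ as the closure of an open submanifold $X$ with strictly pseudoconvex boundary inside a complex manifold $\Omega$, and extend $\bar X\times G$ to a differentiable principal $G$-bundle $\Pi$ on $\Omega$. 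Applying Theorem \ref{ExtToCollar} to the formally integrable $J$ yields an open neighborhood $\Omega'$ of $\bar X$ and an integrable bundle ACS $J'$ on $\Pi|_{\Omega'}$ extending $J$. Shrinking $\Omega'$ I may assume it is Stein and deformation retracts onto $\bar X$; then $(\Pi|_{\Omega'},J')$ is a holomorphic principal $G$-bundle over a Stein manifold which is topologically trivial, hence, by Grauert's Oka principle, holomorphically trivial. A holomorphic trivialization restricts to a formally holomorphic trivialization of $(\bar X\times G,J)$, i.e. to the desired $u$ with $u\cdot J=J_0$.

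The main obstacle is precisely this surjectivity step, and within it the passage from the merely $C^\infty$-up-to-the-boundary datum $J$ to a genuinely holomorphic trivialization. Theorem \ref{ExtToCollar} is what makes this possible, by producing an honest holomorphic bundle on an open neighborhood; one then needs that $\Omega'$ can be taken Stein, so that Grauert's theorem applies (guaranteed since a compact Stein manifold with boundary admits a fundamental system of Stein neighborhoods), and that the resulting trivialization is smooth up to $\partial\bar X$ (automatic, as it is holomorphic on the open set $\Omega'\supset\bar X$). Everything else reduces to the identification of the stabilizer of $J_0$ with ${\cal O}^\infty(\bar X,G)$.
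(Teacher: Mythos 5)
Your proposal is correct and follows essentially the same route as the paper: well-definedness and injectivity come from identifying the stabilizer of $J_0$ in the gauge group with ${\cal O}^\infty(\bar X,G)$, and surjectivity is reduced to producing a global formally holomorphic trivialization of $(\bar X\times G,J)$ via the collar neighborhood theorem of \cite{HiNa}, Theorem \ref{ExtToCollar}, a Stein neighborhood of $\bar X$, and Grauert's classification theorem. The only place where the paper is more explicit is the construction of the Stein neighborhood (via a bicollar neighborhood and the sublevel sets $\bar X_\varepsilon$ of an extended strictly plurisubharmonic defining function), which you invoke as a known fact rather than constructing.
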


 Note that $[(J_0,f)]=[(J_0,f')]$ in ${\cal M}$ if and only if $[f]=[f']$ mod ${\cal O}^\infty(\bar X,G)$, so $R$ is well defined and injective. The image of $R$ is the space of gauge classes $[(J,\theta)]$ such that $J$ admits a global formally holomorphic trivialization on $\bar X$.\vspace{1mm}
 
 Therefore, Theorem \ref{IsoGenNew} will follow from:
 \begin{lm}
 Let $J$ be a formally integrable bundle ACS on $\bar X\times G$	. Then $(\bar X\times G, J)$ admits a global formally holomorphic trivialization on $\bar X$.
 \end{lm}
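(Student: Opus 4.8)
The plan is to reduce the abstract boundary situation to an honestly holomorphic one on a Stein manifold and then finish by Grauert's Oka principle. First I would realize $\bar X$ as an embedded submanifold with boundary: by the collar neighbourhood theorem for strictly pseudoconvex boundaries \cite{HiNa}, $\bar X$ embeds holomorphically, with unchanged interior $X$, as an open submanifold of a complex manifold $\Omega$ in which $\bar X$ is a submanifold with smooth strictly pseudoconvex boundary. I then take $\Pi\coloneq \Omega\times G$, so that $\Pi|_{\bar X}$ is canonically the trivial bundle $\bar X\times G$ carrying the given $J$. Theorem \ref{ExtToCollar} applies verbatim and produces an open neighbourhood $\Omega'$ of $\bar X$ in $\Omega$ together with an integrable bundle ACS $J'$ on $\Pi|_{\Omega'}$ extending $J$; thus $(\Pi|_{\Omega'},J')$ is a genuine holomorphic principal $G$-bundle whose restriction to $\bar X$ recovers $J$.

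Next I would shrink $\Omega'$ to a Stein neighbourhood of $\bar X$. Choosing a defining function $\rho$ for $\bar X$ which is strictly plurisubharmonic on a full neighbourhood of $\bar X$ in $\Omega'$, the sublevel sets $X_\varepsilon\coloneq \{\rho<\varepsilon\}$ for small $\varepsilon>0$ are relatively compact, contain $\bar X=\{\rho\le 0\}$, and form a fundamental system of Stein neighbourhoods of $\bar X$: each $X_\varepsilon$ carries $\rho$ as a proper strictly plurisubharmonic exhaustion and is therefore Stein by Grauert's solution of the Levi problem (equivalently, strong pseudoconvexity gives holomorphic convexity, while the maximum principle for $\rho$ on a hypothetical positive-dimensional compact analytic subset shows there is none). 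Hence $(\Pi|_{X_\varepsilon},J')$ is a holomorphic principal $G$-bundle over a Stein manifold, topologically trivial since $\Pi|_{X_\varepsilon}=X_\varepsilon\times G$.

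I would then invoke Grauert's Oka principle: over a Stein manifold a holomorphic principal $G$-bundle is holomorphically trivial as soon as it is topologically trivial. This yields a global $J'$-holomorphic section $\sigma\colon X_\varepsilon\to \Pi|_{X_\varepsilon}$. Since $\bar X\subset X_\varepsilon$ and $J'|_{\bar X}=J$, the restriction $\sigma|_{\bar X}$ is a section of $\bar X\times G$, smooth up to the boundary, whose differential commutes with $J$ at every point of $\bar X$ — that is, a global formally holomorphic trivialization of $(\bar X\times G,J)$, as required.

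The whole argument is driven by Theorem \ref{ExtToCollar} and by Grauert's theorem, and I expect the only genuine subtlety to be boundary regularity. Applying Grauert directly on the open Stein interior $X$ would give a holomorphic trivialization there, but with no control near $\partial\bar X$; the point of first extending $J$ to $J'$ across the boundary is precisely that the resulting trivialization lives on the Stein set $X_\varepsilon$ containing all of $\bar X$, so its restriction is automatically smooth up to $\partial\bar X$ and no separate extension-across-the-boundary step is needed. Accordingly the only things to check with care are that the collar embedding preserves the interior and the strict pseudoconvexity of $\partial\bar X$, and that $\rho$ can be taken strictly plurisubharmonic on a full neighbourhood of $\bar X$ so that the $X_\varepsilon$ are indeed Stein.
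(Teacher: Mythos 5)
Your proposal is correct and follows essentially the same route as the paper: embed $\bar X$ in a complex manifold via the collar neighborhood theorem of \cite{HiNa}, extend $J$ to an integrable bundle ACS on a neighborhood by Theorem \ref{ExtToCollar}, shrink to a Stein neighborhood $X_\varepsilon\supset\bar X$, apply Grauert's theorem, and restrict the resulting holomorphic trivialization to $\bar X$. The only cosmetic imprecision is calling $\rho$ itself a proper exhaustion of $X_\varepsilon$ (one should compose with a convex function blowing up at $\varepsilon$, or simply quote that a relatively compact strictly pseudoconvex domain is Stein), but this does not affect the argument.
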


 \begin{proof}
 By the collar neighborhood 	theorem \cite{HiNa}, $\bar X$ can be embedded in a complex manifold $\Omega$. $\bar X$ is a compact Stein manifold with boundary, so, by definition,  there exists a strictly pseudoconvex smooth nonpositive real function $\rho$ on $\bar X$ such that $ \partial\bar X=\rho^{-1}(0)$ and  $0$ is a regular value of $\rho$. Let $\tilde\rho$ be any smooth extension of $\rho$ to a sufficiently small open neighborhood $\Omega'$ of $\bar X$ in $\Omega$ such that $\tilde \rho$ is positive on $\Omega'\setminus \bar X$. Therefore the fibre $\tilde\rho^{-1}(0)=\tilde\rho^{-1}(0)= \partial\bar X$ is compact and $0$ is a regular value of $\tilde\rho$. By the bicollar neighborhood theorem for compact real hypersurfaces (see for instance \cite[Theorem 2.32, p. 68]{Ma}) there exists $\eta>0$, an open neighborhood $U$ of $\partial\bar X$ in $\Omega'$ and a diffeomorphism $h: \partial\bar X\times(-\eta,\eta)\to U$ such that $h(x,0)=x$   and $\tilde\rho(h(x,t))=t$ for any $(x,t)\in  \partial\bar X\times (-\eta,\eta)$. Choose $\varepsilon\in(0,\eta)$ sufficiently small such that $\tilde\rho$  remains strictly pseudoconvex on the pre-image 
 $$\bar X_\varepsilon\edf(\tilde\rho|_U)^{-1}((-\infty,\varepsilon]).$$
It follows that $\bar X_\varepsilon$ is still a compact Stein manifold with boundary. 
 
 Let now $\Pi=\Omega\times G$ be the trivial ${\cal C}^\infty$-bundle on $\Omega$ and $J$ a formally integrable bundle ACS on the restriction $\Pi|_{\bar X}=\bar X\times G$. 
 
 By Theorem \ref{ExtToCollar}, there exists an open neighborhood $\Omega'$ of $\bar X$ and an integrable bundle ACS $J'$ on $\Pi|_{\Omega'}$ extending $J$. Choosing $\varepsilon$ sufficiently small, we may suppose $\Omega'=X_\varepsilon$. The pair $(\Pi|_{\Omega'},J')$ is a topologically trivial holomorphic $G$-bundle on the Stein manifold $X_\varepsilon$, so it is holomorphically trivial by Grauert's classification theorem of holomorphic bundles on Stein manifolds (see  \cite{Gr}, \cite[Theorem 8.2.1, p. 356]{For}). The restriction to $\bar X$ of a global holomorphic trivialization  $(\Pi|_{\Omega'},J')$ will be a global formally holomorphic  trivialization of $(\bar X\times G,J)$.
 
    \end{proof}

\section{Strong integrability at non-pseudoconvex boundary points}\label{SIAtBoundary}

Let $L=\bar X\times\C$ be the trivial line bundle on $\bar X$ and let $\sigma_0$ the section $x\mapsto(x,1)$. For a Dolbeault operator $\delta$ on $L$ let $\alpha_\delta$ be (0,1)-form defined by $\delta\sigma_0=\alpha_\delta\sigma_0$. The map $\delta\mapsto \alpha_\delta$ identifies the  space $\Delta_L$ of  formally integrable Dolbeault operators on $L$ with the closed subspace space $Z^{01}(\bar X)\subset A^{01}(\bar X)$ of   $\bar\partial$-closed (0,1)-forms on $\bar X$.

A local frame $\sigma=\varphi\sigma_0\in\Gamma(U,L)$ is formally holomorphic with respect to $\delta$ if and only if $\bar\partial\varphi+\alpha_\delta\varphi=0$. If $U$ is simply connected we may write $\varphi=e^\psi$, and the above formula becomes $\bar\partial\psi=-\alpha_\delta$. This shows that
\begin{re} \label{ExLm}
Let $\delta\in\Delta_L$ and $U\subset \bar X$ be a simply connected open set. 	$L$ admits a formally $\delta$-holomorphic frame on $U$ if and only if  $\alpha_\delta|_U$ is $\bar\partial$-exact.
\end{re}

Unfortunately, in general, the $\bar\partial$-Poincaré lemma does not hold at boundary points \cite{AH}.
\vspace{2mm}

 For the rest of this section  $\bar X$ will be the complement of the standard ball $B\subset\C^2$	in $\P^2_\C$. The boundary complex \cite[Chapter V]{FK}, \cite{Fo}   on the sphere $S\coloneq\partial\bar X$   reduces to
$$
0\to B^{00}={\cal C}^\infty(S^3,\C)\textmap{\bar\partial_S} B^{01}=\Gamma(S,\Lambda^{01}_H) \to 0,
$$	
where $H\edf T_S\cap jT_S$ and $\Lambda^{01}_H\subset H^*\otimes\C$ is the bundle of $(0,1)$-forms on $H$. Regarding $S^3$ as an $S^1$-bundle on $\P^1_\C$, $H$ is just the horizontal distribution of the connection which corresponds to the Chern connection of the tautological line bundle on $\P^1_\C$.
 
\begin{re} \label{SurjRem} Let $r:A^{01}(\bar X)\to B^{01}$ be the morphism induced by ``restriction to the  boundary". 
\begin{enumerate}
	\item The morphism $H^{01}(\bar X)\to H^{01}(S)$ induced by $r$ is an isomorphism.
	\item The morphism $Z^{01}(\bar X)\to  B^{01}$ induced by $r$ is surjective.
\end{enumerate} 
\end{re}
\begin{proof}
(1) The injectivity of $H^{01}(\bar X)\to H^{01}(S)$ follows by \cite[Theorem 5, p. 355]{AH} taking into account that $H^{01}(\P^2_\C)=0$.	 The surjectity follows from \cite[Theorem 6(c), p. 355]{AH} taking into account that  $H^{02}(\P^2_\C)=0$ and $H^{01}(\bar B)=0$. The latter is obtained using the vanishing of the sheaf cohomology space $H^1(B,{\cal O}_B)$ (which is obvious because $B$ is Stein) and the comparison theorem \cite[Theorem 4.3.1, p. 77]{FK} taking into account  the discussion on p. 57 in the same book. 
\vspace{2mm}\\
(2) Let $\beta\in B^{01}$. The surjectvity of $H^{01}(\bar X)\to H^{01}(S)$ shows that  there exists $\alpha\in Z^{01}(\bar X)$ and $\varphi\in B^{00}$ such that $\beta=r(\alpha)+\bar\partial_S\varphi$. Choosing a smooth extension $\psi\in {\cal C}^\infty(\bar X,\C)$ of $\varphi$ we see that
$$
\beta=r(\alpha)+\bar\partial_S (\psi|_S)=r(\alpha+\bar\partial\psi),
$$
which proves the claim.
\end{proof}
The generalized Cayley transform  
$$
C:\C^2\setminus(\{-1\}\times\C)\to\C^2\setminus(\{-i\}\times\C),\ C(z_1,z_2)\edf \bigg(i \frac{1-z_1}{1+z_1}\,,\,  \frac{z_2}{1+z_1}\bigg) 
$$	
identifies biholomorphically the unit ball $B\subset\C^2$ with the Siegel upper half-space
$$
{\cal U}\edf\{(w_1,w_2)\in\C^2|\ \Im(w_1)>|w_2|^2\},
$$
and  the punctured sphere $S\setminus\{(-1,0)\}$ with the real hypersurface
$$
\Sigma\edf \{(w_1,w_2)\in\C^2|\ \Im(w_1)=|w_2|^2\}\subset \C^2
$$
(the ``unbounded realization" of the 3-sphere, see \cite[p. 112]{Kr}).

Since $C$ is biholomorphic it follows that the boundary operator $\bar\partial_S$ on $S\setminus\{(-1,0)\}$ corresponds via $C$ to the boundary operator $\bar\partial_\Sigma$. Using the  diffeomorphism 
$$
k:\R\times\C\to \Sigma,\ k(t,w)=(t+i|w|^2,w)
$$
we see   \cite[p. 359-361]{AH} that $\bar\partial_\Sigma$ is given explicitly by
\begin{equation}\label{dSigma}
\bar\partial_\Sigma (\psi)=(L(\psi\circ k)\circ k^{-1}) d\bar w_2,
\end{equation}
where $L:{\cal C}^\infty(\R\times\C,\C)\to {\cal C}^\infty(\R\times\C,\C)$ is the Lewy operator \cite{Le}
$$
L\eta=\frac{\partial \eta}{\partial \bar w}-i w\frac{\partial \eta}{\partial t}.
$$
The composition 
$$h\edf k^{-1}\circ C:S\setminus \{(-1,0)\}\to\R\times\C$$
 is a chart of the sphere, and, in this chart, the operator $\bar\partial_S$ is given by
\begin{equation}\label{dS}
\bar\partial_S(\varphi)=(L(\varphi\circ h^{-1})\circ h)C^*(d\bar w_2).
\end{equation}
\begin{lm} With the notations above we have:\label{MV}
\begin{enumerate} 
\item   Let $V\subset S$ be a non-empty open subset. 	The set
$$
M^V\edf\{\beta\in B^{01}|\ \hbox{ the equation }\bar\partial_Su=\beta|_V \hbox{ has a distribution solution }u\in {\cal D}'(V)\}
$$
is a first Baire category subset of the Fréchet space $B^{01}$.
\item The union 
$$M\edf\bigcup_{\substack{\emptyset\ne V\subset S\\ V open}} M^V$$
is a first Baire category subset of the Fréchet space $B^{01}$.
\end{enumerate}

\end{lm}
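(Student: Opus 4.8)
The plan is to realise both $M^V$ and $M$ as countable unions of closed, nowhere dense subsets of the Fréchet space $B^{01}$, localising the nondensity in the Lewy nonsolvability phenomenon encoded by \eqref{dS}. First I would dispose of (2): since $S$ is second countable, fix a countable basis $\{V_n\}$ of nonempty open sets. If $\beta\in M^V$ for some nonempty open $V$, a distribution solution of $\bar\partial_S u=\beta|_V$ restricts to any smaller open set, and $V$ contains some $V_n$; hence $\beta\in M^{V_n}$. Thus $M=\bigcup_n M^{V_n}$, and since a countable union of first category sets is first category, (2) reduces to (1). For (1), fix the nonempty open $V\subseteq S$ and, for nonempty open $W$ with $\bar W\subset V$ and integers $m,N\ge 1$, set $E(W,m,N)\edf\{\beta\in B^{01}\mid \exists\,u\in{\cal D}'(W),\ \bar\partial_S u=\beta|_W,\ |u(\phi)|\le N\|\phi\|_{C^m}\ \forall\,\phi\in C^\infty_c(W)\}$. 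Every distribution on the relatively compact set $W$ has finite order, so if $\beta\in M^V$ then, restricting its solution to a basis element $W\subset V$, one finds $\beta\in E(W,m,N)$ for suitable $m,N$. Hence $M^V\subseteq\bigcup_{W,m,N}E(W,m,N)$, a countable union, and it suffices to prove each $E(W,m,N)$ is closed with empty interior.

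For closedness I would argue sequentially, which is legitimate because $B^{01}$ is metrizable. Given $\beta_k\to\beta$ with solutions $u_k$ obeying the uniform bound, the $u_k$ extend to functionals of norm $\le N$ on the separable Banach space obtained by completing $C^\infty_c(W)$ in the $C^m$-norm; by Banach--Alaoglu together with separability a subsequence converges weak-$*$ to some $u$ with the same bound, and passing to the limit in $\langle u_k,\bar\partial_S^{t}\phi\rangle=\langle\beta_k,\phi\rangle$ yields $\bar\partial_S u=\beta|_W$. Thus $\beta\in E(W,m,N)$.

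The empty interior is the heart of the matter. Each $E(W,m,N)$ is convex and balanced: if $u$ solves for $\beta$ with bound $N$ then $\lambda u$ solves for $\lambda\beta$ with bound $|\lambda|N\le N$ for $|\lambda|\le1$, and midpoints of solutions solve midpoints of data. Were its interior nonempty, $0$ would be an interior point, so $E(W,m,N)$ would contain a neighbourhood of $0$; rescaling, and using that solvability is scale invariant, would then force \emph{every} $\beta\in B^{01}$ to be solvable on $W$. Using the homogeneity of the CR structure of $S^3$ under the unitary group I may assume $\bar W\subset S\setminus\{(-1,0)\}$, so that \eqref{dS} identifies $\bar\partial_S$ on $W$, up to the nonvanishing factor $C^*(d\bar w_2)$, with the Lewy operator $L$ transported by the chart $h$. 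But Lewy's theorem, in its distributional (Hörmander) form, produces a smooth datum for which $Lu=f$ has no distribution solution on any nonempty open set; extending the corresponding $f$ to a global $\beta\in B^{01}$ gives an element not solvable on $W$, a contradiction. Hence each $E(W,m,N)$ is nowhere dense, and $M^V$ is first category.

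The main obstacle I anticipate lies entirely in the empty-interior step. One must (i) deal with the exceptional chart point $(-1,0)$, for which the cleanest route is the transitive action of the unitary group on $S^3$ by CR automorphisms, under which $\bar\partial_S$ is equivariant, so that local nonsolvability at any point is transferred into the chart domain; and (ii) invoke the correct strong form of Lewy's result, namely the absence of \emph{distribution} solutions on an arbitrary nonempty open set, and carry it faithfully through the nonvanishing coefficient $C^*(d\bar w_2)$ and the diffeomorphism $h$ without weakening the ``no solution anywhere'' conclusion. The closedness step is routine by comparison, but still requires that the order $m$ and the constant $N$ be genuinely fixed along each piece of the union, which is exactly what the indexing by $(W,m,N)$ secures.
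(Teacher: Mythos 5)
Your proposal is correct and follows essentially the same route as the paper: both reduce (2) to (1) via a countable basis, write $M^V$ as a countable union of convex, balanced sets of data admitting solutions of bounded order and norm on a relatively compact $W$, prove closedness by weak-$*$ compactness (Alaoglu), and derive empty interior from convexity plus the Lewy--H\"ormander nonsolvability example transported through the chart identification (\ref{dS}). The only cosmetic differences are your sequential Banach--Alaoglu argument in place of the paper's appeal to Schwartz and Alaoglu--Bourbaki, and your use of unitary homogeneity to avoid the point $(-1,0)$ where the paper simply shrinks $\omega$ into $V\setminus\{(-1,0)\}$.
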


\begin{proof} (1) We adapt the elegant proof of \cite[Theorem 3.2, p. 135]{Ho} to our situation explaining the fundamental ideas and the necessary changes. Note that our   $M$ is a subset of the Fréchet space $B^{01}$ of sections of the bundle $\Lambda^{01}_S$  on the compact manifold $S$,  whereas the set $M$ considered in   Hörmander's proof is a subset of the Fréchet space 
$$\dot B(\Omega)\edf \{\varphi\in {\cal C}^\infty(\Omega,\C)|\ \forall \gamma\in\N^n\,\forall \varepsilon>0\, \exists K\subset\Omega \hbox{ compact s.t.} \sup_{\Omega\setminus K}|D^\gamma\varphi|<\varepsilon\}$$
 associated with an open set $\Omega\subset\R^n$. \\

  For an open set $\omega\subset S\setminus\{(-1,0)\}$ and $N\in\N^*$ put
 %
 %
\begin{equation*}
\begin{split}
{\cal D}_N(\omega)&\edf \bigg\{u\in {\cal D}(\omega)|\ \sum_{|\gamma|\leq N}\sup \big|D^\gamma(\varphi\circ h^{-1})\big|\leq\frac{1}{N}\bigg\},\\
 {\cal D}'_N(\omega)&\edf\big\{u\in {\cal D}'(\omega)|\ \forall \varphi\in{\cal D}_N(\omega),\, |u(\varphi)|\leq 1\big\}=\\
 &=\big\{u\in {\cal D}'(\omega)|\ \forall\varphi\in{\cal D}(\omega),\,|u(\varphi)|\leq N\sum_{|\gamma|\leq N}\sup \big|D^\gamma(\varphi\circ h^{-1})\big|\big\}.		
\end{split}
\end{equation*}

A theorem of Schwartz (see for instance \cite[Theorem 10.12 p. 87]{Vo}) shows that ${\cal D}_N(\omega)$ is a neighborhood of 0 in ${\cal D}(\omega)$, so, by Alaoglu–Bourbaki's Theorem (see \cite[Theorem 4.7 p. 32]{Vo}), ${\cal D}'_N(\omega)$ is weakly compact in ${\cal D}'(\omega)$.\\

It follows that  
$$
M_N^\omega\edf \{\beta\in B^{01}|\ \exists u\in {\cal D}'_N(\omega),\, \bar\partial_S u=\beta|_\omega\}
$$
is a closed subset of $B^{01}$.  We claim that $\cringle{M}_N^\omega=\emptyset$. Indeed, note first that, taking into account  (\ref{dS}), the properties of Lewy's operator explained in \cite[p. 136]{Ho} and \cite[Theorem 3.1]{Ho}, there exists $g\in B^{01}$  with  $\mathrm{supp}(g)\subset \omega$ such that the equation $\bar\partial_S u=g$ has no solution in ${\cal D}'(\omega)$. If an interior point $\beta\in \cringle{M}_N^\omega$ existed, we could find $\varepsilon>0$ such that  $\beta+\varepsilon g\in M_N^\omega$. Since $M_N^\omega$ is symmetric and convex, we obtain $\frac{\varepsilon}{2}g\in M_N^\omega$, which contradicts the way in which $g$ has been chosen.

Now fix a non-empty open subset $\omega\subset S$ with $\bar \omega\subset V\setminus\{(-1,0)\}$. Therefore $\bar \omega$ is a compact subset of $V$ which is contained in the domain of the chart $h$. Taking into account the definition of  ${\cal D}'(V)$, it follows that for any $u\in {\cal D}'(V)$ there exists $N\in\N$ such that $u|_{\omega}\in {\cal D}'_N(\omega)$. This implies
$$
M^V\subset \union_{N\in\N} M_N^\omega,
$$
so $M^V$, being a subset of a first category subset, is itself of first category. \\ \\
(2) Use a countable basis for the topology of $S$.
\end{proof}

For an open set $U\subset\bar X$ put $Z^U\edf\{\alpha\in Z^{01}(\bar X)|\ \alpha|_U\hbox{ is $\bar\partial$-exact}\}$.
\begin{pr} \label{ZU}
The union
$$
Z\edf \bigcup_{\substack{U\subset\bar X open\\ U\cap\partial\bar X\ne\emptyset}} Z^U
$$
is a first Baire category subset of $Z^{01}(\bar X)$.
\end{pr}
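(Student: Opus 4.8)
The plan is to reduce Proposition \ref{ZU} to the previous Baire category result, Lemma \ref{MV}, via the restriction-to-boundary morphism $r\colon A^{01}(\bar X)\to B^{01}$. The key geometric observation is that for a boundary point, whether a $\bar\partial$-closed form $\alpha$ becomes $\bar\partial$-exact on a neighborhood $U$ of that point is controlled, near the boundary, by the induced tangential Cauchy--Riemann data $r(\alpha)$ on $S$. So the strategy is to show that every $\alpha\in Z^U$ (with $U\cap\partial\bar X\neq\emptyset$) has boundary restriction $r(\alpha)$ lying in the set $M$ of Lemma \ref{MV}, i.e.\ to establish the inclusion $r(Z)\subset M$, and then pull back first category through $r$.

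First I would make the reduction precise. Fix $\alpha\in Z^U$ with $U$ meeting $\partial\bar X$, and pick a boundary point $x\in U\cap\partial\bar X$. By Remark \ref{ExLm} (passing to a simply connected neighborhood if needed), $\alpha|_U$ being $\bar\partial$-exact means there is a smooth $\psi$ on $U$ with $\bar\partial\psi=-\alpha$. Restricting to a small piece $V\subset S$ of the boundary contained in $U$, and writing $u=-\psi|_V$, the commutation of restriction with $\bar\partial$ (namely $r(\bar\partial\psi)=\bar\partial_S(\psi|_S)$, the defining relation of the tangential operator) gives $\bar\partial_S u=r(\alpha)|_V$, with $u$ a smooth (hence distributional) solution. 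Thus $r(\alpha)\in M^V\subset M$. This shows $r(Z)\subset M$, so $Z\subset r^{-1}(M)$.

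Next I would invoke continuity and category. The restriction morphism $r\colon Z^{01}(\bar X)\to B^{01}$ is continuous and linear between Fréchet spaces, and by Remark \ref{SurjRem}(2) it is \emph{surjective}; since $M$ is of first Baire category in $B^{01}$ (Lemma \ref{MV}(2)), the preimage $r^{-1}(M)$ is of first category in $Z^{01}(\bar X)$. Here I would use the standard fact that for a continuous \emph{surjective} linear map between Fréchet spaces (which is open by the open mapping theorem), the preimage of a first category set is again first category: if $M=\bigcup_n M_n$ with each $\bar M_n$ nowhere dense, then each $r^{-1}(\bar M_n)$ is closed, and its interior maps under the open map $r$ into the interior of $\bar M_n$, which is empty, forcing $r^{-1}(\bar M_n)$ to be nowhere dense. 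Hence $Z\subset r^{-1}(M)=\bigcup_n r^{-1}(M_n)$ is a first category subset of $Z^{01}(\bar X)$, as claimed.

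The main obstacle I anticipate is the clean verification of the boundary-restriction inclusion $r(Z)\subset M$ at the level of the tangential complex: one must check carefully that an interior solution $\psi$ of $\bar\partial\psi=-\alpha$ on $U$ genuinely restricts to a solution of the tangential equation $\bar\partial_S u=r(\alpha)$ on a boundary piece $V$, i.e.\ that $r$ intertwines $\bar\partial$ with $\bar\partial_S$ and that smooth traces exist (which they do, as $\psi$ is smooth up to the boundary on the open set $U$). A secondary technical point is ensuring the open mapping / first-category pullback argument is applied to $r$ as a map onto $B^{01}$ rather than merely into it, which is exactly where the surjectivity in Remark \ref{SurjRem}(2) is essential.
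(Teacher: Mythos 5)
Your proposal is correct and follows essentially the same route as the paper: the inclusion $Z^U\subset r^{-1}(M^{U\cap S})$ via the fact that restriction to the boundary carries $\bar\partial$-exact forms to $\bar\partial_S$-exact ones, followed by pulling back first category through $r$, which is open by Remark \ref{SurjRem}(2) and the Open Mapping Theorem. The only cosmetic difference is that you route the exactness through Remark \ref{ExLm} and a simply connected neighborhood, which is unnecessary since $Z^U$ is defined directly by $\bar\partial$-exactness.
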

\begin{proof}
Let $r:Z^{01}(\bar X)\to B^{01}$ be the 	restriction morphism. The restriction to $U\cap S$ of a $\bar\partial$-exact form on $U$ is $\bar\partial_S$-exact. This shows that, with  the notation introduced in Lemma \ref{MV}, we have
$$
Z^U\subset r^{-1}(M^{U\cap S}),\ Z\subset r^{-1}(M).
$$
The claim follows now from Lemma \ref{MV} taking into account that
\begin{enumerate}[(a)]
\item 	by Remark \ref{SurjRem} and the Open Mapping Theorem, $r$ is an open map. 
\item Since $r$ is open and continuous, the correspondence $r^{-1}$ (pre-image via $r$) commutes with interior and closure.  \end{enumerate}
\end{proof}

Let now $\Delta_L$ be the space of formally integrable Dolbeault operators on $L$. 
For $x\in\partial\bar X$ let  $\Delta_L^x$ be the subspace of those $\delta\in\Delta_L$ which admit a formally $\delta$-holomorphic frame  around $x$. The intersection $\Delta_L^{\rm si}\edf\bigcap_{x\in\partial\bar X} \Delta_L^x$ is precisely the space of formally integrable Dolbeault operators on $L$ which are strongly integrable (admit formally holomorphic frames) around all boundary points. We can now prove:

\begin{pr}\label{HoNew}
Let $\bar X$ be the complement of the standard ball $B\subset\C^2$	in $\P^2_\C$,   $L$ be a trivial ${\cal C}^\infty$ complex line bundle on $\bar X$. 
\begin{enumerate}
\item   The union $\bigcup_{x\in\partial\bar X}\Delta_L^x$ is  a  first  Baire category  subset of $\Delta_L$, in particular its subsets $\Delta_L^x$  (for $x\in \partial\bar X$), $\Delta_L^{\rm si}$ have the same property. 
\item  For any $x\in \partial\bar X$,  $\Delta_L^x$ is an infinite codimensional affine subspace of  $\Delta_L$.
\item $\Delta_E^{\rm si}$  is a first Baire category, infinite codimensional but  dense  affine subspace of $\Delta_L$.
\end{enumerate}
\end{pr}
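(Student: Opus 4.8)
The plan is to transport everything, via the identification $\delta\mapsto\alpha_\delta$ of $\Delta_L$ with the Fréchet space $Z^{01}(\bar X)$ (which sends the trivial operator to $0$, so that our subspaces will be linear, hence affine), to statements about $\bar\partial$-closed $(0,1)$-forms, and then to the boundary operator $\bar\partial_S$ through the restriction map $r:Z^{01}(\bar X)\to B^{01}$ of Remark \ref{SurjRem}. For part (1), by Remark \ref{ExLm} an element $\delta\in\Delta_L^x$ corresponds to an $\alpha\in Z^{01}(\bar X)$ with $\alpha|_U$ being $\bar\partial$-exact for some simply connected $U\ni x$. Hence $\bigcup_{x\in\partial\bar X}\Delta_L^x$ is identified, as a subset of $Z^{01}(\bar X)$, with the set $Z$ of Proposition \ref{ZU}: every member of the union lies in some $Z^U$ with $U\cap\partial\bar X\ne\emptyset$, and conversely any $\alpha\in Z^U$ is locally exact around each point of the nonempty set $U\cap\partial\bar X$. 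Proposition \ref{ZU} then gives that this union is of first Baire category, and since $\Delta_L^x$ and $\Delta_L^{\rm si}=\bigcap_x\Delta_L^x$ are subsets of a meagre set, they inherit this property. Intersecting the relevant neighborhoods of $x$ shows each $\Delta_L^x$ is a linear subspace, and $\Delta_L^{\rm si}$, as an intersection of such, is affine as well.

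For part (2), the heart is to show that $N^x\edf\{\beta\in B^{01}\mid \beta|_V \text{ is } \bar\partial_S\text{-exact for some open } V\ni x\}$ has infinite codimension in $B^{01}$; since $\Delta_L^x\subset r^{-1}(N^x)$ and $r$ is a continuous linear \emph{surjection} (Remark \ref{SurjRem}(2)), this yields $\mathrm{codim}_{Z^{01}(\bar X)}\Delta_L^x\ge\mathrm{codim}_{B^{01}}N^x=\infty$. To estimate the codimension of $N^x$ I would use the explicit model: after composing with a unitary rotation we may assume $x$ lies in the domain of the Cayley chart $h$, where $\bar\partial_S$ becomes the Lewy operator $L$ by formula (\ref{dS}). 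Consider data $\beta_g$ built from functions $g=g(t)$ of the real chart variable $t$ alone, cut off in $w$ by a fixed $\chi$ with $\chi\equiv 1$ near $x$, and supported in a small interval $J\ni t_0$. By Lewy's theorem \cite{Le}, the equation $L\eta=g$ is locally solvable near $x$ only if $g$ is real-analytic near $t_0$ (and, by Cauchy--Kovalevskaya, analytic $g$ are solvable). Hence the linear map $g\mapsto[\beta_g]\in B^{01}/N^x$ has kernel contained in the germs of real-analytic functions, so its image surjects onto ${\cal C}^\infty(J)/{\cal C}^\omega$, which is infinite-dimensional; therefore $\dim\big(B^{01}/N^x\big)=\infty$. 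I expect this step to be the main obstacle: there is \emph{no} compactly supported distribution in the cokernel of $\bar\partial_S$ (so the obstruction is not a naive orthogonality, and closed-range arguments are unavailable), and one must instead exploit the analytic-regularity phenomenon of \cite{Le}, \cite{Ho} to extract an honest infinite-dimensional quotient.

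For part (3), $\Delta_L^{\rm si}$ is of first Baire category by (1), and infinite-codimensional because $\Delta_L^{\rm si}\subset\Delta_L^x$ for the $x$ treated in (2). The remaining, and least obvious, point is density, which I would prove directly using biholomorphisms of $\P^2_\C$ rather than heavy extension theory. For $0<s<1$ let $\phi_s\in\Aut(\P^2_\C)$ be the projective transformation given in the affine chart by $w\mapsto sw$; then $\phi_s(B)=B_s\subset B$, so $\phi_s(\bar X)=\P^2_\C\setminus B_s$ is a closed neighborhood of $\bar X$ with $\bar X$ contained in its interior $\P^2_\C\setminus\bar B_s$. Given $\alpha\in Z^{01}(\bar X)$, the pullback $\alpha_s\edf(\phi_s^{-1})^*\alpha$ is a $\bar\partial$-closed $(0,1)$-form defined on this neighborhood. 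Every boundary point of $\bar X$ is an \emph{interior} point of the open set $\P^2_\C\setminus\bar B_s$ on which $\alpha_s$ is smooth and $\bar\partial$-closed, so by the Dolbeault--Grothendieck lemma $\alpha_s$ is locally $\bar\partial$-exact around each such point; restricting the local potentials to $\bar X$ shows $\alpha_s|_{\bar X}\in\Delta_L^{\rm si}$. Finally $\phi_s\to\id$ in the $C^\infty$ topology as $s\to1^-$, whence $\alpha_s|_{\bar X}\to\alpha$ in $Z^{01}(\bar X)$, proving that $\Delta_L^{\rm si}$ is dense in $\Delta_L$ and completing Proposition \ref{HoNew}.
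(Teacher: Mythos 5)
Your parts (1) and (2) follow the paper's own route essentially verbatim: (1) is the identification of $\bigcup_x\Delta_L^x$ with the set $Z$ of Proposition \ref{ZU}, and (2) is the paper's argument with the subspace of boundary forms whose local expression depends only on $t$, Lewy's analytic-regularity theorem, and the surjectivity (hence codimension-preservation under $r^{-1}$) of the restriction map from Remark \ref{SurjRem}; your reformulation via the quotient map $g\mapsto[\beta_g]\in B^{01}/N^x$ is the same computation. Part (3) is where you genuinely diverge, and your argument is correct and arguably more elementary. The paper proves density by passing to the boundary: it uses openness of $r$ to reduce to the density of real-analytic sections in $B^{01}$, solves $\bar\partial_S u=\alpha|_S$ locally by Cauchy--Kovalevskaya for analytic data, and then invokes the Andreotti--Hill injectivity of $H^{01}(W^-)\to H^{01}(W\cap S)$ for a Stein neighborhood $W$ to upgrade tangential exactness to one-sided $\bar\partial$-exactness. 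You instead exploit the specific geometry of $\bar X=\P^2_\C\setminus B$: the automorphisms $\phi_s$ shrink the ball, so $\phi_{1/s}^*\alpha$ is the restriction to $\bar X$ of a $\bar\partial$-closed form on the open neighborhood $\P^2_\C\setminus\bar B_s$, whence strong integrability at every boundary point by the ordinary Dolbeault--Grothendieck lemma, and $\phi_{1/s}^*\alpha\to\alpha$ in $C^\infty(\bar X)$ because $\phi_{1/s}$ maps the compact $\bar X$ into itself and tends to the identity with all derivatives. Your route avoids the CR machinery and the Open Mapping Theorem entirely and in fact proves the slightly stronger statement that restrictions of forms extending $\bar\partial$-closed across $\partial\bar X$ are dense; its cost is that it is tied to the symmetry of this particular $\bar X$, whereas the paper's boundary-analytic argument would survive in situations without such a one-parameter family of ambient automorphisms. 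Both are valid proofs of the proposition as stated.
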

\begin{proof}
(1) The first claim follows from  Remark   \ref{ExLm} and Proposition \ref{ZU} taking into account that $\bigcup_{x\in\partial\bar X}\Delta_E^x$ corresponds to $Z$ via the isomorphism 
$$\Delta_L\ni\delta\mapsto\alpha_\delta\in Z^{01}(\bar X).$$
(2) Taking into account Remarks \ref{ExLm}, \ref{SurjRem}, it suffices to prove that the linear subspace
$$B^x\edf\{\beta\in B^{01}|\ \hbox{ the germ of $\beta$ at $x$ is $\bar\partial_S$-exact}\}
$$
is infinite codimensional in $B^{01}$. We may suppose $x=(1,0)$ whose image via the chart $h=k^{-1}\circ C$ is the origin of $\R\times\C$. Let $\Lambda\subset B^{01}$ be the linear subspace defined by  
$$\Lambda\edf \{\beta\in B^{01}|\ \hbox{ the germ of $(\beta/C^*(d\bar w_2))\circ h^{-1}$ at 0 depends only on $t$}\}.$$

The main theorem of \cite{Le} shows that, for a function $f\in {\cal C}^\infty(\R\times\C)$ whose germ at 0 depends only on $t$, if the equation
$$Lu=f
$$
is smoothly solvable around 0, then this germ is real analytic. Taking into account formula (\ref{dS}) and Lewy's result it follows that
$$
B^x\cap\Lambda \subset \{\beta\in \Lambda|\ \hbox{ the germ of $(\beta/C^*(d\bar w_2))\circ h^{-1}$ at 0 is analytic}\},
$$
whose codimension in $\Lambda$ is infinite. Therefore $B^x$ is infinite codimensional in $B^{01}$. \vspace{3mm}\\
(3) The first idea would be to use the subspace $Z^{01}_{\rm an}(\bar X)$ of real analytic $\bar\partial$-closed forms on $\bar X$. Unfortunately, it is not clear if this space is dense in  $Z^{01}(\bar X)$. 

Instead we will will use the pre-image of the space $B^{01}_{\rm an}$ of real analytic sections of the bundle $\Lambda^{01}_H$, which  is dense in $B^{01}$. Since $r:Z^{01}(\bar X)\to B^{01}$ is continuous and open, it follows that $r^{-1}(B^{01}_{\rm an})$ is a dense subset  of $Z^{01}(\bar X)$.  

Taking into account again Remark   \ref{ExLm}  it suffices to prove that for any $\alpha\in r^{-1}(B^{01}_{\rm an})$ and any $x\in\partial\bar X$ there exists $U\subset\bar X$ open such that $x\in U$ and $\alpha|_U$ is $\bar\partial$-exact. 

Since $\alpha|_S$ is real analytic, by the first order Cauchy–Kovalevskaya theorem, there exists an open neighborhood $V$ of $x$ in $S$ and an analytic solution of the equation $\bar\partial_S u=\alpha|_S$ on $V$. Let $W$ be Stein neighborhood of $x$ in $\C^2$ such that $W\cap S\subset V$. Put
$$
W^\pm\edf\{x\in W|\ \pm(1-\|x\|^2)\geq 0\}. 
$$
Therefore 
$$
 W^+=W\cap \bar B,\ W^-=W\cap\bar X.
$$
Applying \cite[Theorem 5, p. 355]{AH} to the triple $(W,W^-,W^+)$ we see that the restriction morphism
$$
H^{01}(W^-)\to H^{01}(W\cap S)
$$
is injective.  Since the restriction of $\alpha|_S$  to $W\cap S$ is $\bar\partial_S$-exact, it follows that the restriction of $\alpha$ to $W^-$ is $\bar\partial$-exact, which proves the claim.

\end{proof}

\section{Appendix. Extending a  section defined on a manifold with boundary}
\label{appendix}

We begin with  a natural formalism for characterizing the  smoothness  sections in   vector bundles in an invariant way, without making use of charts and local trivializations. Instead we will use connections. 

Let $\Omega$ be a an $n$-dimensional differentiable manifold,  $E$ be differentiable real vector bundle on $\Omega$ and   $\nabla$,   $\nabla_E$ linear connections on $T^*_\Omega$ and $E$ respectively. 

We define  inductively differential operators  (acting on local sections of $E$)    by $D^1=\nabla_E$, $D^{k+1}=(\nabla^{\otimes k}\otimes \nabla_E)\circ D^k$. A continuous section $s:U\to E$ (on an open set $U\subset\Omega$) is of class ${\cal C}^1$ on  if and only $D^1s$ exists and is continuous. Proceeding inductively we see that $s$ is of class ${\cal C}^{k+1}$ if and only if it is of class ${\cal C}^k$ and $D^k s$ if of class ${\cal C}^1$.  This formalism is useful for  characterizing smoothness of sections on a submanifold with boundary $\bar X\subset\Omega$ obtained as the closure of an open submanifold $X\subset \Omega$.   Using \cite{Se} and a partition of unity   subordinate  to an open cover   with domains of charts we obtain:
\begin{re}\label{SmoothOnBarX}
A smooth section $s\in\Gamma(X,E)$ extends to a smooth section on $\bar X$ if and only if   $D^ks\in \Gamma(X,T^{*\otimes k}_\Omega\otimes E)$ extends continuously to 	$\bar X$ for any $k\geq 0$. This condition is equivalent to the existence of a smooth extension of $s$ on a collar neighborhood of $\bar X$ in $\Omega$.
\end{re}

 Let $X\subset\Omega$ be an open submanifold whose closure $\bar X$ is a smooth submanifold with boundary $\partial\bar X=\bar X\setminus X$ and let $s\in\Gamma(\bar X,E)$ be a smooth section defined on $\bar X$. Our problem is to extend $s$ across an open piece $\partial\bar X\cap O$ of the boundary using a {\it given} smooth section $\sigma$ defined on an open set $O\supset X$ which agrees with $s$ on $X$ (see Fig. \ref{fig2}). 
 \begin{figure}[h!]
\includegraphics[scale=0.5]{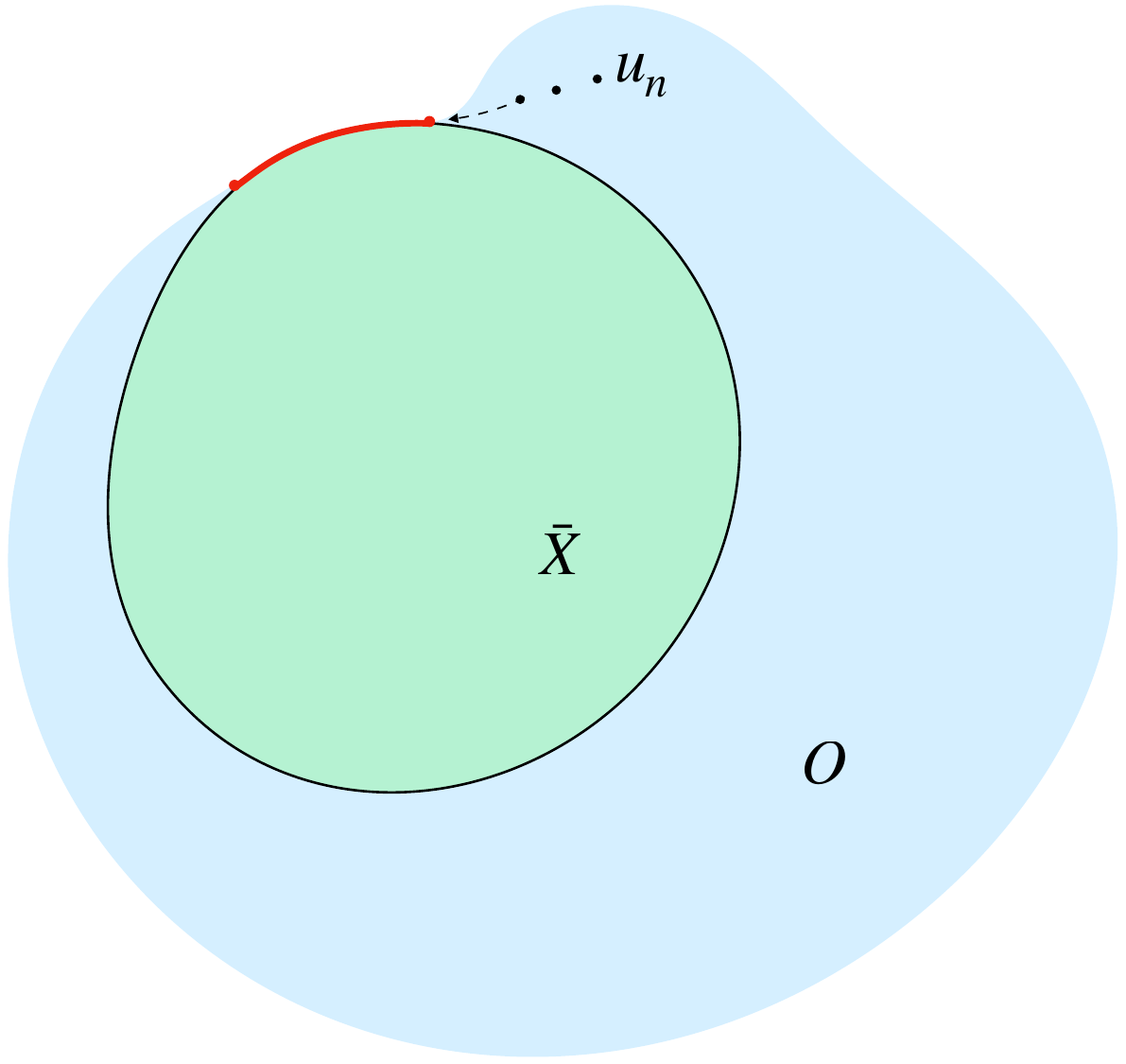}
\caption{\small A sequence $(u_n)_n$ of $O$   converging to a point $x\in\bar X\setminus O$. }
\label{fig2}	
\end{figure}

Therefore, let $O\subset \Omega$ be a open set containing $X$ and $\sigma\in\Gamma(O,E)$ be a smooth section  such that 
\begin{equation}\label{hyp}
\sigma|_X=s|_X.
\end{equation}
The closure of $X$ in $O$ is $\bar X\cap O$, so the assumption (\ref{hyp}) implies 
\begin{equation}\label{hypnew}
\sigma|_{\bar X\cap O}=s|_{\bar X\cap O}.
\end{equation}

Formula (\ref{hypnew}) suggests that we can glue together $s$ and $\sigma$ to obtain a section on the union $\bar X\cup O$ which agrees with $s$ on $\bar X$ and with $\sigma$ on $O$. The problem is that 
\newtheorem*{diff}{Difficulty} 
\begin{diff} The section $s\vee\sigma:\bar X\cup O\to E$ obtained in this way {\it might not even be  continuous}.  Continuity on $O$ (which is open, and on which $s\vee\sigma$ coincides with $\sigma$) is obvious, but not at the points of $\bar X\setminus O$. 	
\end{diff}

\begin{re}\label{RemContinSect} The section $s\vee\sigma$ is continuous if and only if the triple $(s,O,\sigma)$ satisfies the property:\end{re}
\newtheorem*{prop}{\bf P$_0$}
\begin{prop}
For every sequence $(u_n)_n$ of $O$ which converges to a point $x\in\bar X\setminus O$, the limit $\lim_{n\to\infty} \sigma(u_n)$ exists and coincides with $s(x)$ (see Fig. \ref{fig2}).
\end{prop}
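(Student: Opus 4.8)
The content of the final statement is that, once $s\vee\sigma$ is known to be continuous, for every sequence $(u_n)_n$ in $O$ converging to a point $x\in\bar X\setminus O$ the limit $\lim_{n\to\infty}\sigma(u_n)$ exists and equals $s(x)$; the plan is to derive this limit directly from continuity, and then to prove the converse, thereby obtaining the full equivalence of Remark~\ref{RemContinSect}. Throughout I set $Y=\bar X\cup O$ with the subspace topology inherited from $\Omega$. Since every manifold here is second countable, $\Omega$ is metrizable, hence so is the subspace $Y$; in particular $Y$ is first countable, so continuity of the glued map $s\vee\sigma\colon Y\to E$ may be tested on sequences. Two reductions will be made at the outset. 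First, $s\vee\sigma$ is automatically continuous on the open set $O$, where it coincides with the smooth section $\sigma$, so the only points at which continuity is in question are those of $\bar X\setminus O$. Second, because $X\subset O$, any such point lies on the boundary, $\bar X\setminus O\subset\partial\bar X$, so the question is exactly the behaviour of $\sigma$ as one approaches a boundary point from inside $O$.

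The direction that actually establishes the limit is ``$s\vee\sigma$ continuous $\Rightarrow$ P$_0$'', and I would argue it straight from sequential continuity. Fix $x\in\bar X\setminus O$ and an arbitrary sequence $(u_n)_n$ in $O$ with $u_n\to x$. Then $(u_n)_n$ is a sequence in $Y$ converging to the point $x\in Y$, so continuity of $s\vee\sigma$ at $x$ gives $(s\vee\sigma)(u_n)\to(s\vee\sigma)(x)$. Now $u_n\in O$ forces $(s\vee\sigma)(u_n)=\sigma(u_n)$, while $x\in\bar X$ forces $(s\vee\sigma)(x)=s(x)$. Hence $\lim_{n\to\infty}\sigma(u_n)$ exists and equals $s(x)$, which is precisely the assertion of P$_0$.

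For the converse ``P$_0$ $\Rightarrow$ $s\vee\sigma$ continuous'', it suffices by the reductions to verify continuity at an arbitrary $x\in\bar X\setminus O$. I would fix a local trivialization of $E$ over a chart around $x$, so that convergence in the total space $E$ of points whose projections tend to $x$ reduces to convergence of the base points together with the fibre components. Given any sequence $y_n\to x$ in $Y$, partition the indices as $\N=I_{\bar X}\sqcup I_O$ with $I_{\bar X}=\{n:y_n\in\bar X\}$ and $I_O=\{n:y_n\in O\setminus\bar X\}$. Along $I_{\bar X}$ one has $(s\vee\sigma)(y_n)=s(y_n)\to s(x)$ by continuity of $s$ on $\bar X$; along $I_O$, whenever this set is infinite, $(y_n)_{n\in I_O}$ is a sequence in $O$ tending to $x$, so P$_0$ gives $\sigma(y_n)\to s(x)$. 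Since $(s\vee\sigma)(y_n)$ is thus the union of finitely many subsequences all converging to the common value $s(x)=(s\vee\sigma)(x)$, the full sequence converges to $(s\vee\sigma)(x)$, and continuity at $x$ follows.

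The routine parts are the two reductions and the forward implication, each of which is a one-line application of the definitions once sequential continuity is available. The main obstacle is the converse: the glued section must be shown continuous against \emph{every} approaching sequence, including ones that oscillate infinitely often between $\bar X$ and $O$, which is why the argument is organised as a partition into subsequences rather than a single limit computation; some care is also needed to interpret convergence in the total space $E$ through a local trivialization near $s(x)$ rather than in an ad hoc fashion.
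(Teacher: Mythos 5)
Your proposal is correct and takes essentially the same route as the paper: the paper's justification of Remark \ref{RemContinSect} is precisely your partition of an approaching sequence into its $\bar X$-terms (handled by continuity of $s$) and its $O$-terms (handled by \textbf{P}$_0$), compressed into the remark that every such sequence has image admitting a subsequence converging to $(s\vee\sigma)(x)$, which yields continuity at $x$ in a first countable space. The forward implication (continuity $\Rightarrow$ \textbf{P}$_0$), which you spell out, is the trivial half of the equivalence and is left implicit in the paper.
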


 Indeed,   if {\bf P}$_0$ holds, then for any sequence  $(u_n)_n$ of $\bar X\cup O$ converging to  a point $x\in\bar X\setminus O$, the sequence $((s\vee\sigma)(u_n))_n$ admits a subsequence converging to $(s\vee\sigma)(x)$, which implies continuity at $x$. \vspace{1mm}

Our goal is to construct a {\it smooth}, not only continuous, extension of $s$ across $\bar\partial X\cap O$ using $\sigma$. Note that, by Remark \ref{SmoothOnBarX}, the assumption (\ref{hyp}) also implies 
\begin{equation}\label{hypnew-k}
D^k\sigma|_{\bar X\cap O}=	D^ks|_{\bar X\cap O} \ \forall k\geq 0,
\end{equation}
which suggests that a smoothness criterion for $s\vee\sigma$ (similar to Remark \ref{RemContinSect}) should hold.
The problem is that, in general, the union $\bar X\cup O$  has no  natural manifold structure, so smoothness is not defined for sections on this set. We are interested in extensions of $s$ to a  larger submanifold with boundary $\bar Y\supset \bar X$ which interpolates between $\bar X$ and $O$ in the following sense:
\begin{dt}\label{interpdef}
Let $Y\subset O$ be an open submanifold whose closure $\bar Y$ is a smooth submaniold with boundary. We say that $\bar Y$ interpolates between $\bar X$ and $O$ if 
\begin{equation}\label{interpeq}
X\subset \bar X\cap O\subset Y\subset \bar Y\subset \bar X\cup O.
\end{equation}

\end{dt}

\begin{figure}[h!]
\includegraphics[scale=0.5]{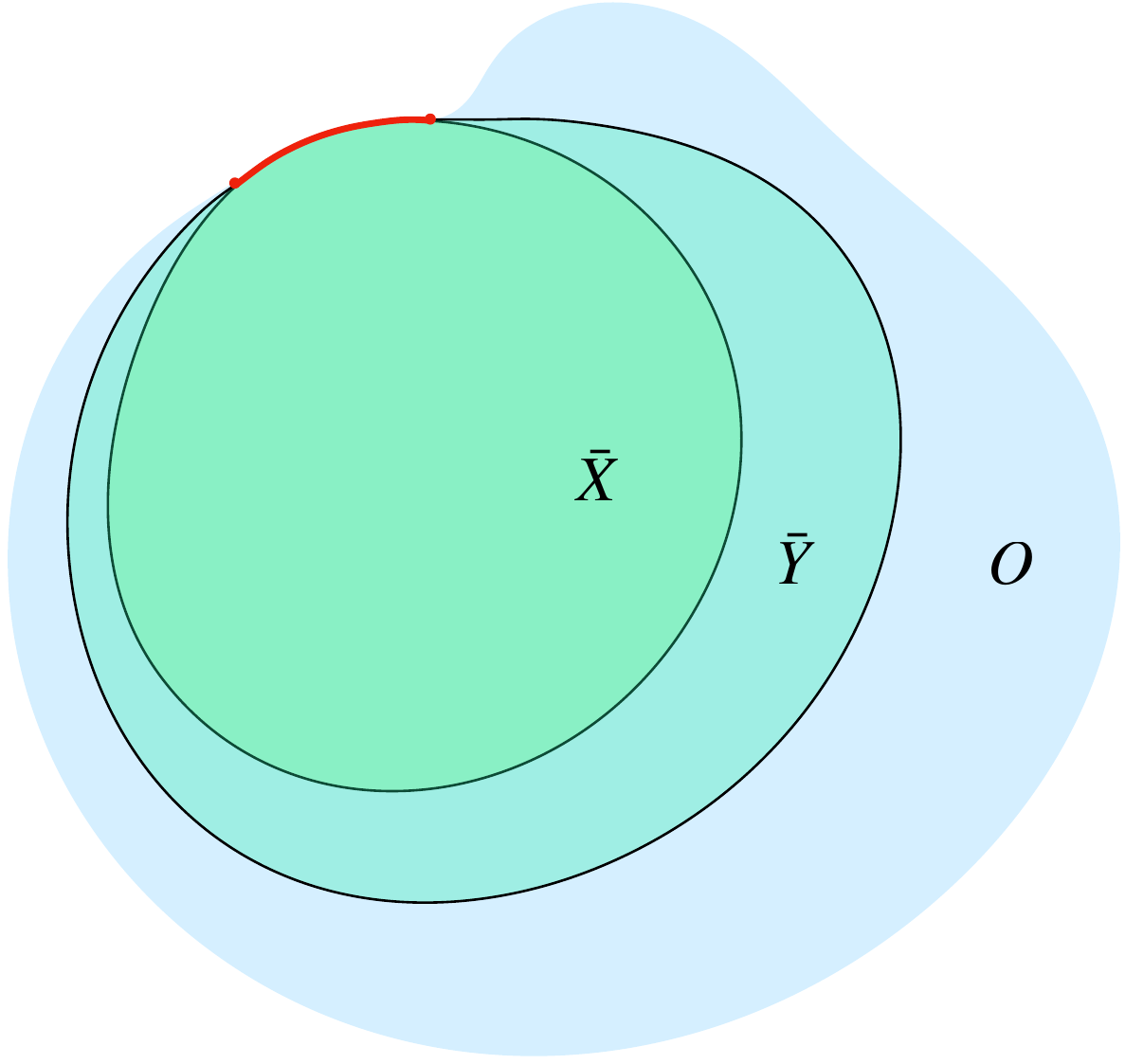}
\caption{\small A submanifold with boundary interpolating between $\bar X$ and $O$.}
\label{fig3}	
\end{figure}
Note that if $Y$ has this property, then

$$ \bar X\cap O= \bar X\cap ( \bar X\cap O)\subset \bar X\cap Y\subset \bar X\cap O,
$$
so $\bar X\cap Y=\bar X\cap O$, i.e. $Y$ and $O$ define the same relatively open subset of $\bar X$, so the same open piece $\partial\bar X\cap Y=\partial\bar X\cap O$ of the boundary.
\vspace{2mm}

Taking into account Remark \ref{SmoothOnBarX}, we obtain:

\begin{re}\label{smooth-extensions}
The restriction of   $s\vee\sigma$ 	to any submanifold with boundary $\bar Y$ interpolating between $\bar X$ and $O$ is smooth provided  for any $k\in\N$ the triple $(s,O,\sigma)$ satisfies the property 
\end{re}
\newtheorem*{propk}{\bf P$_k$}
\begin{propk}
For every sequence $(u_n)_n$ of $O$ which converges to a point $x\in\bar X\setminus O$, the limit $\lim_{n\to\infty} (D^k\sigma)(u_n)$ exists and coincides with $(D^ks)(x)$.
\end{propk}

The goal of this section is the following lemma, which is essentially due to Mauro Nacinovich \cite{Nac}.  Nacinovich's result  concerns the  extension problem for  almost complex structures considered in \cite{HiNa}; this problem can be handled  taking $E=\End(T_\Omega)$ in our formalism. The method of proof in the general case is the same.

The lemma states that, replacing $O$ by a smaller open set $O'$ with $\bar X\cap O'=\bar X\cap O$, the triple $(s,O',\sigma|_{O'})$ will satisfy {\bf P}$_k$ for any $k$, so Remark \ref{smooth-extensions} applies, giving a smooth extension of $s$ to $\bar Y$ for any submanifold with boundary $\bar Y$ interpolating between $\bar X$ and $O'$. Note that, for any such $\bar Y$ we'll have $\bar X\cap Y=\bar X\cap O'=\bar X\cap O$, so we obtain extensions across the original open piece $\partial \bar X\cap O$ of the boundary. 

\begin{lm}\label{Nac}
Let $X\subset\Omega$ be an open submanifold whose closure $\bar X$ is a smooth submanifold with boundary  and let $s\in\Gamma(\bar X,E)$ be a smooth section defined on $\bar X$.

Let $O\subset \Omega$ be an open submanifold containing $X$ and $\sigma\in\Gamma(O,E)$ be a smooth section  such that $\sigma|_X=s|_X$. 

There exists an open subset $O'\subset O$  such that $\bar X\cap O'=\bar X\cap O$ and the triple $(s,O',\sigma|_{O'})$ satisfies {\bf P}$_k$ for any $k\geq 0$.
\end{lm}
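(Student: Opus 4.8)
The plan is to reduce the statement to a single quantitative fact about a section vanishing to infinite order along $\bar X\cap O$, and then to carve out $O'$ by one inequality governed by the distance to the ``missing'' boundary $A\edf\bar X\setminus O$. First I would fix a smooth extension $\tilde s$ of $s$ to an open neighborhood $W$ of $\bar X$ in $\Omega$, which exists by Remark~\ref{SmoothOnBarX}; since $\bar X\subset W$, replacing $O$ by $O\cap W$ does not alter $\bar X\cap O$, so I may assume $O\subset W$. Setting $\tau\edf\sigma-\tilde s\in\Gamma(O,E)$, hypothesis (\ref{hyp}) gives $\tau|_X=0$, hence $D^k\tau|_X=0$, and by continuity $D^k\tau|_{\bar X\cap O}=0$ for every $k$, which is exactly (\ref{hypnew-k}) rephrased. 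Because $x\in\bar X$ forces $D^k\tilde s(u_n)\to D^k\tilde s(x)=(D^ks)(x)$ along any sequence $u_n\to x$, property $\mathbf{P}_k$ for the triple $(s,O',\sigma|_{O'})$ becomes equivalent to the single requirement that $D^k\tau(u_n)\to 0$ whenever $u_n\in O'$ and $u_n\to x\in\bar X\setminus O'$.

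The key device is to package all derivatives into one continuous gauge. After fixing a Riemannian metric on $\Omega$ and a bundle metric on $E$, which induce norms on each $T^{*\otimes k}_\Omega\otimes E$, I would set
\[
h(u)\edf\sum_{k\geq 0}2^{-k}\min\big(\|D^k\tau(u)\|,1\big),
\]
a uniformly convergent, hence continuous, function on $O$ that vanishes precisely where all $D^k\tau$ vanish (in particular on $\bar X\cap O$) and satisfies $\|D^k\tau(u)\|\leq 2^k h(u)$ as soon as $h(u)<2^{-k}$. If $A=\emptyset$ then $\mathbf{P}_k$ is vacuous and $O'=O$ works, so assume $A\neq\emptyset$; since $A=\bar X\cap(\Omega\setminus O)$ is closed and disjoint from $O$, the function $u\mapsto\mathrm{dist}(u,A)$ is continuous and strictly positive on $O$. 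I would then define
\[
O'\edf\{u\in O\mid h(u)<\mathrm{dist}(u,A)\},
\]
an open subset of $O$.

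It remains to verify the three conclusions. For $\bar X\cap O'=\bar X\cap O$: a point $u\in\bar X\cap O$ lies outside the closed set $A$, so $h(u)=0<\mathrm{dist}(u,A)$ and $u\in O'$, while the reverse inclusion is trivial; in particular $\bar X\setminus O'=A$, so the open boundary piece $\partial\bar X\cap O'=\partial\bar X\cap O$ is preserved. For property $\mathbf{P}_k$: if $u_n\in O'$ with $u_n\to x\in\bar X\setminus O'=A$, then $\mathrm{dist}(u_n,A)\to 0$, whence $h(u_n)<\mathrm{dist}(u_n,A)\to 0$; once $h(u_n)<2^{-k}$ we obtain $\|D^k\tau(u_n)\|\leq 2^k h(u_n)\to 0$, and the reduction of the first paragraph then gives $D^k\sigma(u_n)\to(D^ks)(x)$. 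I expect the substance to be conceptual rather than computational: the obstacle is that near $A$ the nearest point of $\bar X$ may lie in $A\subset\Omega\setminus O$, so the infinite-order vanishing of $\tau$ along $\bar X\cap O$ cannot be exploited pointwise through a Taylor estimate. The gauge $h$ together with the distance-to-$A$ cutoff circumvents this by forcing, simultaneously and for all orders $k$ at once, every surviving point of $O'$ near $A$ to lie in the region where all derivatives of $\tau$ are already small.
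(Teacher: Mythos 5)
Your proof is correct and follows essentially the same strategy as the paper: you carve out $O'$ by a single inequality bounding a continuous ``all-orders discrepancy'' function by the distance to $\bar X\setminus O$, and then verify {\bf P}$_k$ exactly as the paper does. The only (harmless) variations are technical: you compare $D^k\sigma(u)$ with $D^k\tilde s(u)$ for a Seeley-type extension $\tilde s$ of $s$, where the paper compares with $D^ks(r(u))$ for a retraction $r$ onto $\bar X$, and you force convergence of the series with the weights $2^{-k}\min(\cdot,1)$ where the paper uses cutoff functions $\chi_k$ to make its sum locally finite.
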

\begin{proof}
Let $N$ be the normal line bundle of $\partial\bar X$,  let
$$
\begin{tikzcd}
N\ar[d,"\nu"']\ar[rd,     "f"]& \\
\partial\bar X \ar[r, hook]&\Omega	
\end{tikzcd}
$$ 
be a tubular neighborhood of $\partial\bar X$ in $\Omega$ \cite[Section 4.5]{Hir} and $N_+\subset N$ be the outer side of the normal bundle. Replacing $\Omega$ by $\bar X\cup f(N_+)$ and $O$ by its intersection with $\bar X\cup f(N_+)$,  we may suppose that there exists a continuous retraction $r:\Omega\to \bar X$.\vspace{2mm}

Fix a Riemannian metric $g$ on $\Omega$ and define $\delta:\Omega \to\R$ by $\delta(\omega)\edf d_g(\omega,\bar X\setminus O)$, where $d_g$ is the distance associated with $g$. It is a well known general fact (valid in the general framework of metric spaces) that such a function is 1-Lipschitz, so continuous. Moreover, since $\bar X\setminus O$ is closed, $\delta^{-1}(0)=\bar X\setminus O$.\vspace{2mm}

For any $r>0$ the set $V_r\edf \delta^{-1}([0,r))$ is an open neighborhood of $\bar X\setminus O$ and $\bigcap_{r>0} V_r=\bar X\setminus O$.
Let $(\rho_n)_n$, $(r_n)_n$  be decreasing sequences of positive numbers converging to 0 such that $\forall n\in\N,\ \rho_n< r_n$. For any $n\in\N$ let $\chi_n$ be a smooth $[0,1]$-valued function  on $\Omega$  such that 
$$V_{\rho_n}\subset \chi_n^{-1}(1)\subset  \mathrm{supp}(\chi_n)\subset  V_{r_n}. 
$$

Let $d_k$ be any metric on the  total space of the bundle $T^{*\otimes k}_\Omega\otimes E$ which is compatible with its topology. A natural way to obtain such a metric is to choose  the connections $\nabla$, $\nabla_E$ compatible with $g$ and an Euclidian structure $h$ on $E$ respectively, and to note that these data endow the total space of $T^{*\otimes k}_\Omega\otimes E$ with a  Riemannian metric $\g_k$ which makes the projection $T^{*\otimes k}_\Omega\otimes E\to \Omega$ a Riemannian submersion and the inclusions $T^{*\otimes k}_u\otimes E_u\hookrightarrow T^{*\otimes k}_\Omega\otimes E$  isometric embeddings. 

Note first that the sum
\begin{equation}\label{NacinoSum}
\sum_{k\geq 0} \chi_k(u) \ d_k \big(D^k\sigma(u)\,,\, D^ks(r(u))\big)	
\end{equation}
is locally finite on $O$; indeed, for a point $v\in O$, we have $\delta(v)>0$ and the function $\chi_k$ vanishes on $\delta^{-1}(\frac{1}{2}\delta(v),+\infty)$ (which is an open neighborhood of $v$) for any $k\in\N$ for which $r_k<\frac{1}{2}\delta(v)$.  Since $r$ is continuous, it follows that (\ref{NacinoSum}) defines a continuous function on $O$, so the subset
\begin{equation}\label{NaciFormula}
O'\edf \big\{u\in O|\ \sum_{k\geq 0} \chi_k(u) \ d_k \big(D^k\sigma(u)\,,\, D^ks(r(u))\big)<\delta(u)\big\}	
\end{equation}
of $O$ is open. For a point $x\in\bar X\cap O$ we have $r(x)=x$ and $D^k\sigma(x)=D^ks(x)$ for any $k\geq 0$ by (\ref{hypnew-k}), so $x\in O'$. Therefore $\bar X\cap O=\bar X\cap O'$ as claimed. \vspace{2mm}

We now prove that the triple $(s,O',\sigma|_{O'})$ satisfies {\bf P}$_\kappa$ for any $\kappa\geq 0$. Fix $\kappa\geq 0$ and let $(u_n)_n$ be a sequence of $O'$ such that $\lim_{n\to\infty} u_n=x\in \bar X\setminus O=X\setminus O'$.  Since $\lim_{n\to\infty} u_n\in \bar X\setminus O$, it follows $\lim_{n\to \infty} \delta(u_n)=0$, so there exists $n_\kappa\in\N$ such that $\delta(u_n)<\rho_\kappa$, hence $\chi_\kappa(u_n)=1$, for any $n\geq n_\kappa$.   Since $u_n\in O'$, formula (\ref{NaciFormula}) shows that
$$
\forall n\geq n_\kappa,\ d_\kappa \big(D^\kappa\sigma(u_n)\,,\, D^\kappa s(r(u_n))\big)<\delta(u_n),
$$
so
\begin{equation}\label{limdkappa}
\lim_{n\to\infty} d_\kappa \big(D^\kappa\sigma(u_n)\,,\, D^\kappa s(r(u_n))\big)=0.	
\end{equation}
 But $\lim_{n\to\infty} u_n=x$ implies $\lim_{n\to\infty} r(u_n)=r(x)=x$, so 
 \begin{equation}\label{limit=Dkappas(x)}
 \lim_{n\to\infty} D^\kappa s(r(u_n))=D^\kappa s(x),	
 \end{equation}
 because $D^\kappa s$ is continuous on $\bar X$. By (\ref{limdkappa}), (\ref{limit=Dkappas(x)}) we have $\lim_{n\to\infty} D^\kappa\sigma(u_n)=D^\kappa s(x)$, as claimed.

\end{proof}

\end{document}